\numberwithin{equation}{section}
\theoremstyle{plain}
\newtheorem{theorem}{Theorem}[section]
\newtheorem{lemma}[theorem]{Lemma}
\newtheorem{proposition}[theorem]{Proposition}
\newtheorem{corollary}[theorem]{Corollary}
\theoremstyle{remark}
\newtheorem{remark}[theorem]{Remark}
\newtheorem{definition}[theorem]{Definition}
\newtheorem{example}[theorem]{Example}
\newcommand{\bra}[1]{\left( #1 \right)}
\newcommand{\sqa}[1]{\left[ #1 \right]}
\newcommand{\cur}[1]{\left\{ #1 \right\}}
\newcommand{\ang}[1]{\left< #1 \right>}
\newcommand{\abs}[1]{\left| #1 \right|}
\newcommand{\nor}[1]{\left\| #1 \right\|}
\newcommand{\norm}[1]{\left\| #1 \right\|}
\newcommand{\rnum}{\mathbb{R}}
\newcommand{\prob}{\mathbb{P}}
\newcommand{\R}{\mathbb{R}}
\renewcommand{\d}{\, \mathrm{d}}
\newcommand{\ca}{\mathbf{a}}
\newcommand{\meas}{\mathcal{M}}
\newcommand{\cA}{\mathcal{A}}
\newcommand{\cF}{\mathcal{F}}
\newcommand{\Prob}{\mathcal{P}_2}
\newcommand{\ACQV}[1]{\operatorname{AC}^{#1}}
\newcommand{\cL}{\mathcal{L}}
\newcommand{\cV}{\mathcal{V}}
\newcommand{\cE}{\mathcal{E}}
\newcommand{\costM}{\cc_{BB}}
\newcommand{\costFP}{\cc_{FPE}}
\newcommand{\Sim}{\operatorname{Sym}}
\newcommand{\esp}[1]{\mathbb{E}\sqa{#1}}
\newcommand{\cc}{{\textsf{c}}}
\renewcommand{\chi}{\mathbf{1}}
\newcommand{\eps}{\varepsilon}
\renewcommand{\epsilon}{\varepsilon}
\DeclareMathOperator{\id}{Id}
\newcommand{\E}{\mathbb{E}}
\renewcommand{\P}{\mathbb{P}}
\newcommand{\conv}{*}
\newcommand{\F}{\mathcal{F}}
\renewcommand{\subset}{\subseteq}
\newcommand{\qv}[1]{\langle {#1}\rangle}
\newcommand{\Wd}{\cc_{MOT} }
\DeclareMathOperator{\Tr}{Tr}
\renewcommand{\varphi}{\phi}
\renewcommand{\chi}{\mathds{1}}
\begin{document}

\begin{frontmatter}

\title{A Benamou-Brenier formulation of martingale optimal transport}

\runtitle{A Benamou-Brenier formulation of martingale optimal transport}

\begin{aug}

\author{\fnms{Martin} \snm{Huesmann} \thanksref{a}\ead[label=e1]{huesmann@iam.uni-bonn.de}}
\and
\author{\fnms{Dario} \snm{Trevisan} \thanksref{b}\corref{}\ead[label=e2]{dario.trevisan@unipi.it}}
\address[a]{Institut f\"ur Angewandte Mathematik, Rheinische Friedrich-Wilhelms-Universit\"at Bonn,\\
DE-53115 Bonn, Germany. \printead{e1}}
\address[b]{Dipartimento di Matematica, Universit\`a degli Studi di Pisa,\\
56127 Pisa, Italy. {\printead{e2}}}



\end{aug}

\begin{abstract}
 We introduce a Benamou-Brenier formulation for the continuous-time martingale optimal transport problem as a weak length relaxation of its discrete-time counterpart. By the correspondence between  classical martingale problems  and Fokker-Planck equations, we obtain an equivalent PDE formulation for which  basic properties such as existence, duality and geodesic equations can be analytically studied, yielding corresponding results for the stochastic formulation.  In the one dimensional case, sufficient conditions for finiteness of the cost are also given and a link between geodesics and porous medium equations is partially investigated.
\end{abstract}

\begin{keyword}
\kwd{Martingale Optimal Transport}
\kwd{Strassen's theorem}
\kwd{Fokker-Planck equations}
\kwd{martingale problem}
\kwd{porous medium equation}
\end{keyword}



\end{frontmatter}

\section{Introduction}

Given two probability measures $\mu, \nu$ on $\rnum^d$ and a cost function $\cc:\R^d\times\R^d\to\R$, the martingale optimal transport problem is the variational problem
\begin{equation}\label{eq:discrete mot} 
\inf \cur{ \E\sqa{ \cc( X_0, X_1  )}  :  \text{ $(X_0,X_1)$ is a  martingale, $(X_0)_\sharp \prob = \mu$, $(X_1)_\sharp \prob = \nu$ }},
\end{equation} 
where we denote by $(X_0)_\sharp \prob$ and $(X_1)_\sharp \prob$ the marginal laws of $(X_0, X_1)$.

This variant of the classical Monge-Kantorovich problem \cite{Vill09, AmGiSa08}, with the additional  martingale constraint, originates from intriguing questions of worst case bounds for derivate prices in model-independent finance, see e.g.\ \cite{BeHePe12} for discrete-time and \cite{GaHeTo14} for continuous-time.

In classical optimal transport there is a one-to-one correspondence between discrete-time couplings and continuous-time couplings, in the sense that any continuous-time solution induces a discrete-time solution and any discrete-time solution can be optimally interpolated to a unique continuous-time solution, e.g. by  using  McCann's displacement interpolation \cite{McC95}.
For martingale optimal transport the link between the discrete- and the continuous-time problem is less clear and only \cite{HuSt16, BeLaTo17} provide,  in dimension one, a continuous-time interpretation of the discrete-time transport problem using Skorokhod embedding techniques developed in \cite{BeCoHu16}. However, this approach does not lead to time consistent couplings, i.e.\ such that the induced couplings between two arbitrary intermediate times will be optimal between their marginals.

The aim of this article is to narrow this gap by focusing on a certain class of continuous-time martingale transport problems that naturally appear via a weak length relaxation of the discrete-time problems. 

The main idea to identify the correct class of continuous-time problems and to link it with the discrete-time problems is to transfer to the martingale context the interpretation of the Lagrangian action functional in the Benamou-Brenier formula \cite{BeBr00} as a length functional on the $L^2$ Wasserstein space $\Prob(\R^d)$ of probabilities with finite second moment. More precisely, first one uses the fact that any sufficiently regular curve $(\rho_t)_{t\in [0,1]}$ in $\Prob(\R^d)$, can be represented via the superposition principle \cite[Theorem 8.3.1]{AmGiSa08} as a stochastic process $(X_t)_{t\in [0,1]}$, defined on some probability space, such that $(X_t)_\sharp \prob = \rho_t$ for $t \in [0,1]$. Then for a partition $\pi=\{t_0=0<\ldots<t_n=1\}$ of the unit interval, one considers the discrete energy of $X$ associated with $\pi$
\begin{equation}\label{discreteBBenergy}
 \sum_{i=1}^n (t_{i+1}-t_i) \cdot \E\sqa{\bra{\frac{|X_{t_{i+1}}-X_{t_i}|}{t_{i+1}-t_i}}^2} ~.
\end{equation}
As the mesh of $\pi$ goes to zero, \eqref{discreteBBenergy} converges for sufficiently regular curves $X$  precisely to the Lagrangian action functional (written in probabilistic terms)
$$ \int_0^1 \E [|\dot{ X_t}|^2]~\d t$$
 of the Benamou-Brenier transport formulation, $\dot{X_t}$ being the time derivative of $X$.

Following this idea and taking into account  the scaling properties of martingales implied by the Burkholder-Davis-Gundy inequalities we have the following result (for a rigorous statement  including all the assumptions on the martingale we refer to Theorem \ref{thm:length relax}):

\begin{theorem}\label{thm:relaxation intro}
 Let  $\cc: \R^d \to \R$ be smooth and of bounded growth. Assume  that $X = (X_t)_{t \in [0,1]}$ is a ``sufficiently regular'' martingale with values in $\R^d$, in particular with $\dot{\ang{X}} = \d \ang{X}/ \d t$ well defined for $t \in (0,1)$. Then, the following limit holds:
\begin{align}\label{eq:length relax intro}
 \lim_{\nor{\pi}\to 0} \sum_{t_i\in\pi}  \E\sqa{ \cc\bra{\frac{X_{t_i}-X_{t_{i-1}}}{ \sqrt{t_i- t_{i-1}} }}} (t_i - t_{i-1}) &  = \int_0^1 \E \left[\cc\left(\textstyle{\sqrt{\dot{\qv{X}_t}}} \, Z\right) \right] \d t,
\end{align} 
where $Z$ is a $d$-dimensional  standard normal random variable independent of $X$.
\end{theorem}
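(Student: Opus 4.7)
The plan is to reduce the Riemann-sum-in-time convergence to a local central-limit-type statement on each subinterval of $\pi$, then assemble by dominated convergence in $t$. Since the martingale is ``sufficiently regular'', I assume in particular that $X$ is continuous with absolutely continuous quadratic variation $\qv{X}_t = \int_0^t a_r \, dr$, so that, possibly after enlarging the probability space, there is an It\^o representation
$$X_t - X_s = \int_s^t \sigma_r \, dB_r, \qquad \sigma_r \sigma_r^\top = a_r = \dot{\qv{X}}_r,$$
for a $d$-dimensional Brownian motion $B$ and a predictable matrix-valued integrand $\sigma$ that, thanks to the regularity assumption, is continuous in $t$ in a suitable (say $L^2(\prob)$) sense.

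The first step is a pointwise-in-$t$ limit. Letting $[t_{i-1}(t), t_i(t)]$ denote the interval of $\pi$ containing $t$, the claim is that for a.e.\ $t \in (0,1)$,
$$\frac{X_{t_i(t)} - X_{t_{i-1}(t)}}{\sqrt{t_i(t) - t_{i-1}(t)}} \;\longrightarrow\; \sqrt{\dot{\qv{X}}_t}\, Z \quad \text{in law}, \qquad Z \sim \cN(0, I_d),$$
as $\nor{\pi} \to 0$. To prove this I would use the ``frozen-coefficient'' approximation: by It\^o's isometry, the $L^2$-distance between the left-hand side and $\sigma_t (B_{t_i(t)} - B_{t_{i-1}(t)})/\sqrt{t_i(t) - t_{i-1}(t)}$ is controlled by
$$\frac{1}{t_i(t) - t_{i-1}(t)} \E \int_{t_{i-1}(t)}^{t_i(t)} \nor{\sigma_r - \sigma_t}^2 \, dr,$$
which vanishes as $\nor{\pi} \to 0$ at every Lebesgue point $t$ of $r \mapsto \sigma_r$, hence a.e.\ Since $B_{t_i(t)} - B_{t_{i-1}(t)}$ is independent of $\sigma_t$, the frozen version equals $\sigma_t Z$ in law for a standard Gaussian $Z$ independent of $\sigma_t$, and by rotational invariance of $Z$ this has the same distribution as $\sqrt{\dot{\qv{X}}_t}\, Z$.

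The second step is to upgrade the convergence in law to convergence of expectations of $\cc$. Here I would exploit the polynomial growth of $\cc$: by the Burkholder-Davis-Gundy inequalities, for any $p \geq 1$,
$$\E \abs{\frac{X_{t_i(t)} - X_{t_{i-1}(t)}}{\sqrt{t_i(t) - t_{i-1}(t)}}}^{p} \lesssim \frac{1}{t_i(t) - t_{i-1}(t)} \E \int_{t_{i-1}(t)}^{t_i(t)} \bra{\Tr \dot{\qv{X}}_r}^{p/2} \, dr,$$
which, under the assumed regularity of $\dot{\qv{X}}$, is uniformly bounded in $i$ and in $\pi$. Taking $p$ strictly larger than the growth exponent of $\cc$ yields a uniformly integrable family and, after integrating in $t$, a $\pi$-uniform $L^1(dt)$ dominating function. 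Writing
$$g_\pi(t) := \E \cc\!\bra{\frac{X_{t_i(t)} - X_{t_{i-1}(t)}}{\sqrt{t_i(t) - t_{i-1}(t)}}},$$
the Riemann sum on the left-hand side of \eqref{eq:length relax intro} is exactly $\int_0^1 g_\pi(t)\, dt$; the integrand converges a.e.\ to $\E\,\cc(\sqrt{\dot{\qv{X}}_t}\, Z)$ and is dominated uniformly in $\pi$, so Lebesgue's dominated convergence theorem delivers the claimed limit.

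The main obstacle I expect is pinning down the precise notion of ``sufficient regularity'' that makes the frozen-coefficient step rigorous: one needs enough continuity in $t$ of $\sigma_r$ (equivalently, of the diffusion coefficient appearing in the martingale-problem/Fokker-Planck representation of $X$ that the rest of the paper exploits) to invoke Lebesgue differentiation and to produce the $\pi$-uniform dominating function via BDG. Handling martingales whose quadratic variation density is only $L^1$ without further smoothness would need an additional mollification/approximation argument, which is presumably what the full assumptions of Theorem~\ref{thm:length relax} encode.
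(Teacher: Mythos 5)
Your proposal shares the core technical ingredients with the paper's proof in Appendix~A --- both use the It\^o/martingale-representation theorem to write $X_t - X_s = \int_s^t \sigma_r\, dW_r$ with $\sigma_r = \sqrt{\dot{\qv{X}}_r}$ (up to an orthogonal factor, which you handle by rotational invariance and the paper handles by rotating the Brownian motion), and both rely on the ``freezing'' approximation controlled by BDG. But the overall route is genuinely different. The paper does a \emph{direct quantitative error estimate}: it exploits the Lipschitz-type hypothesis $|\cc(y)-\cc(x)| \le \lambda(1+|x|^{2p-1}+|y|^{2p-1})|y-x|$ together with Young's inequality to split the summed error into
\[
\varepsilon \cdot \E\Bigl[1 + |Z|^{2p} + \int_0^1 |\sigma_s|^{2p}\,ds\Bigr] \;+\; \lambda(\varepsilon)\cdot\E\int_0^1 \bigl|\sigma_s - \sigma_{\lfloor s\rfloor_\pi}\bigr|^{2p}\,ds,
\]
where the first term is $\pi$-independent and finite, the second vanishes by a.s.\ continuity of $\sigma$, and then $\varepsilon \to 0$. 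You instead prove an a.e.-in-$t$ limit in law of the rescaled increment, upgrade it to convergence of the expectation $g_\pi(t)$ via a BDG-based uniform integrability argument, and then invoke dominated convergence in $t$. This is a clean, more ``qualitative'' architecture that in principle requires only a growth bound on $\cc$ rather than a Lipschitz bound, so if it worked it would be slightly more general.

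However, there is a genuine gap in the dominated-convergence step, and it is more serious than the technicality you flag at the end. Your BDG estimate bounds $g_\pi(t)$ by (a constant plus) the local average
\[
\frac{1}{t_i(t)-t_{i-1}(t)}\,\E\!\int_{t_{i-1}(t)}^{t_i(t)} \bigl|\dot{\qv{X}}_r\bigr|^{p}\,dr.
\]
Under the paper's standing hypothesis $X \in \operatorname{AC}^p$, the function $h(r):=\E|\dot{\qv{X}}_r|^p$ is merely in $L^1(0,1)$. The family of partition-averages $\{A_\pi h\}_\pi$ of an $L^1$ function is bounded in $L^1(dt)$ --- indeed $\int_0^1 A_\pi h = \int_0^1 h$ --- but it is \emph{not} pointwise dominated by a single $L^1$ function, nor is it uniformly integrable in general: the relevant uniform object is the Hardy--Littlewood maximal function, which lies only in weak-$L^1$. (A concrete obstruction: take $h$ with a tall narrow spike of mass one; for a partition cell matching the spike, $A_\pi h$ is large on a small set but carries mass one there.) So ``$L^1$-bounded family $+$ a.e.\ convergence $\Rightarrow$ convergence of integrals'' fails, and dominated convergence is not available without extra regularity of $r\mapsto \E|\dot{\qv{X}}_r|^p$ beyond what the theorem assumes. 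The paper's Young-inequality split precisely sidesteps this: it produces a $\pi$-\emph{independent} integrable majorant multiplied by a free small parameter $\varepsilon$, plus a remainder that vanishes by continuity. If you want to rescue your two-step plan, you would either need to impose something like $\sup_r \E|\dot{\qv{X}}_r|^{p'}<\infty$ for some $p'>p$, or to re-introduce the Lipschitz-type structure and mirror the Young-inequality trick, at which point you essentially recover the paper's proof.
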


Therefore, by interpreting the discrete martingale transport cost as a non-symmetric distance function, the quantity on the right hand side of \eqref{eq:length relax intro} can be seen as the ``length'' of the martingale measured in terms of $\cc$. 
As a direct consequence, there is a natural Benamou-Brenier type formulation of a continuous-time martingale transport problem as the martingale of ``minimal length'' connecting two given measures $\mu$ and $\nu$ which are increasing in convex order,
\begin{equation}
\label{eq:bb-cost-process intro}
\costM(\mu, \nu) := \inf \cur{ \int_0^1 \E\sqa{\cc\bra{\dot{\ang{X}_t} }} \d t },
\end{equation}
where the infimum runs over all martingales connecting $\mu$ and $\nu$ whose quadratic variation process $\qv{X}$ is absolutely continuous w.r.t.\ the Lebesgue measure. Moreover, in \eqref{eq:bb-cost-process intro} we replaced the  initial cost function $\cc$ with its infinitesimal counterpart (still denoted by $\cc$ with a slight abuse of notation), defined on non-negative symmetric $d\times d$ matrices,
\begin{equation}\label{ca-intro}\Sim^d_+ \in a \mapsto   \int_{\R^d} \cc(\sqrt{a}z) \frac{  e^{-\frac{|z|^2}{2}} \d z }{ ( 2 \pi)^{d/2}}.\end{equation}

Notably, this class of cost functions (as well as the larger class of cost functions that additionally depend on time and space considered in the main part of this article) is precisely of the form considered in  \cite{TaTo12}, but the control is restricted on the diffusion term, the drift being null. It could be of interest to provide an argument leading to the general costs considered in \cite{TaTo12} as a relaxation of a semimartingale, perhaps first separating the martingale from the finite variation part via Doob-Meyer decomposition and by scaling differently the two parts.

In the central part of this article, we complement the results of \cite{GaHeTo14, TaTo12} by a new PDE perspective on this problem that  is closer to the original work of Benamou-Brenier \cite{BeBr00} and its extension by Dolbeaut-Nazaret-Savar\'e \cite{DoNaSa09}. Moreover, this point of view usually reduces the complexity of the original problem because we only have to deal with PDEs of the marginals and not with a stochastic process connecting these marginals.

By linking the optimization problem \eqref{eq:bb-cost-process intro} to the classical martingale problem, we show that there is an equivalent formulation in terms of Fokker-Planck equations. Precisely, we let 
\begin{equation}
\label{eq:bb-cost-pde intro}
\costFP(\mu, \nu) := \inf \cur{ \int_0^1 \int_{\R^d} \cc(a_t(x)) \d \varrho_t(x)  \d t \colon \partial_t \varrho = \Tr\bra{ \frac12\nabla^2 a  \varrho},\,  \varrho_0 =\mu, \,   \varrho_1 = \nu },
\end{equation}
where the Fokker-Planck equation $\partial_t \varrho = \Tr\bra{ \frac12\nabla^2 a  \varrho}$ holds in the weak sense (see Section~\ref{sec:notation-basic} for a precise definition). 
In Theorem~\ref{thm:markov-projection} we prove that $\costFP(\mu,\nu)=\costM(\mu,\nu)$ under suitable assumptions on $\cc$. A direct consequence of this formulation as a Lagrangian action minimization problem is that any optimizer will have the time consistency property that all the continuous-time solutions constructed in \cite{HuSt16, BeLaTo17} were lacking. In other words, any optimizer induces a natural interpolation or ``geodesic'' between its marginals.

To find explicit bounds on the cost $\costFP(\mu,\nu)=\costM(\mu,\nu)$ seems however a non-trivial task. Since $\cc$ depends on the diffusion coefficient and not on the quadratic variation of the martingale, the Burkholder-Davis-Gundy inequalities do not imply any bounds on $\costFP(\mu,\nu)=\costM(\mu,\nu)$ in terms of moments, even if the growth of $\cc$ can be controlled. However, assuming that $\cc(a)\lesssim |a|^p$ using Skorokhod embedding techniques we show that in dimension one $\costM(\mu,\nu)<\infty$ if $\nu$ has finite $2p+\eps$-moments for some $\eps>0$. When both $\mu$ and $\nu$ have densities with respect to the Lebesgue measure, one can  also rely on a variant of the Dacorogna-Moser interpolation technique \cite{DaMo90} in this setting.  Notably, this technique, combined with an approximation argument, leads to a constructive PDE proof of Strassen's theorem (Corollary~\ref{cor:strassen}).

Just as in classical transport, the decisive step to understand the optimizer of the martingale Benamou-Brenier problem \eqref{eq:bb-cost-pde intro} is the dual formulation of the variational problem, that we provide in Theorem \ref{thm:duality}. As a byproduct, we also prove the existence of primal optimizers.
An interesting consequence of this duality result is a ``geodesic equation'' of Hamilton-Jacobi type for the optimal potential function $\phi$, which allows us to recognize and construct optimizers. More precisely, given an optimal potential $\phi$, i.e.\ a solution to such a ``geodesic equation'', assuming that all quantities are sufficiently smooth, we define the diffusion coefficient
$$a(t,x)=\nabla \cc^*\bra{ \frac 1 2 \nabla^2 \phi(t,x)},$$
where $\cc^*$ is the Legendre transform of $\cc$. Then,  solving the Fokker-Planck equation $\partial_t\varrho = \Tr\bra{\frac12 \nabla^2 a \varrho}$  with a given initial datum $\varrho_0 = \mu$,  we obtain the solution to the martingale Benamou-Brenier problem between $\mu$ and $\rho_1$. In particular, if we find such a candidate curve connecting the measures $\mu$ and $\nu$ we have found an optimizer for the problem in \eqref{eq:bb-cost-pde intro}.
A particularly nice class of examples is given in dimension one by the cost functions $\cc(a)=a^p$ for $p>1$ (and $a\ge 0$). It then follows that the optimal diffusion coefficient has to solve the pressure equation corresponding to a degenerate porous medium equation (see Remark~\ref{rem:pressure}). Due to the rich literature on porous medium equations this allows us to construct various examples, see Theorem~\ref{thm:existence-pme-hjb} and Example~\ref{ex:friendly giant}. Analogously, in the case $p<1$, solutions should be related to the fast diffusion equation, providing us with another class of examples. In this article, however, we do not pursue this direction. We refer to \cite{BaBeHuKa17} for an investigation of the case $p=1/2$, which allows for an interesting probabilistic representation.

As a final observation, we remark that the diffusive structure of the optimizer to the continuous-time problems, together with our weak length relaxation result, is consistent with the natural interpretation of the discrete-time martingale transport problem as an infinitesimal version of the continuous-time problem. This may also explain why there cannot be a one-to-one correspondence between optimizers of the discrete-time and the continuous-time problems, since mass in the discrete-time problems is known to be split \cite{BeJu16}.  By contrast, in classical transport (at least on $\R^d$) one is not forced to split mass   and one can follow the infinitesimal direction for one unit of time, yielding precisely the connection of the discrete- and continuous-time optimizers.

\paragraph*{Related  literature.}
The one-dimensional discrete-time martingale optimal transport problem is by now well understood due to the seminal work of \cite{BeJu16} for the geometric characterization of optimizers and \cite{BeNuTo16} for a complete duality theory, see also \cite{BeLiOb17}. This was  extended to cover the discrete-time multi-marginal problem in \cite{NuStTa17}. In higher dimensions, a complete picture for the discrete-time problem is still missing. However, there is recent exciting progress in \cite{GhKiLi15, DeTo17, ObSi17}.

The continuous-time version of the martingale optimal transport problem  has been studied in \cite{TaTo12, GaHeTo14, DoSo12, DoSo15} among others, where the main focus of the authors is to establish a duality result which can be interpreted as a robust super/subhedging result. The articles \cite{TaTo12, GaHeTo14} solve the problem by linking it to stochastic control theory, whereas \cite{DoSo12, DoSo15} use a careful discretization procedure of the space variables. Notably, these results  imply numerical schemes to compute the value of the optimisation problem, e.g.\ \cite{BoTa13, TaTo12}.

 As first  observed in \cite{Ot01}, the Benamou-Brenier formula can be interpreted as a  length distance (formally) induced by a Riemannian metric on the space of probability measures with second moment, and it is the basis for the so-called Otto calculus, with applications in PDEs and numerics, see e.g.\ \cite{Vill09, AmGiSa08} for a detailed overview. Moreover, variants of the Benamou-Brenier formulation turned out to be a powerful tool for discrete probability, analysis and geometry \cite{Mi11, Mi13, Ma11, ErMa12}, quantum evolution \cite{CaMa14}, jump diffusions \cite{Er14} and recently to a new approach to the Boltzmann equation \cite{Er16}, and Navier-Stokes equation \cite{ArCrFa17}.

Our duality result could be derived from the results of  \cite{TaTo12} which are established via stochastic control theory. However, we decided to use the PDE point of view to give a short and self-contained proof which we believe is a good example of the complexity reduction arising from such a point of view.

The porous medium equation is a very well studied PDE and we refer to  \cite{Va07} for a comprehensive account. We also quote the recent preprint \cite{BaBeDi17} which considers a degenerate class of porous medium equations in connection with stochastic optimal control problems.

Finally, we quote \cite{BaBeHuKa17} for a nice probabilistic  treatment of the special case $\cc(a)=\Tr(\sqrt{a})$, in connection with stretched Brownian motion.

\paragraph*{Outline.}
 In Section~\ref{sec:notation-basic} we introduce notation and review some technical results useful in the following. In Section~\ref{sec:bb} we introduce and study basic properties of the Benamou-Brenier formulation, in particular the connection with the PDE formulation (Theorem~\ref{thm:markov-projection}).  In Section~\ref{sec:duality} the duality result is established, together with examples, while Section~\ref{sec:one-dim} focuses on the one-dimensional case, providing sufficient conditions for finiteness of the transportation cost and links with porous medium equations. Conclusions and open problems are stated in Section~\ref{sec:conclusion}. Appendix~\ref{app:proof} contains the proof of the rigorous version of Theorem~\ref{thm:relaxation intro}.

\paragraph*{Acknowledgements.} Both authors thank M. Beiglb\"ock, J.\ Backhoff Veraguas and S.\ K\"allblad for interesting discussions on martingale optimal transportation.  The first author acknowledges support by the Deutsche Forschungsgemeinschaft through the CRC 1060 \emph{The Mathematics of emergent effects}, the \emph{Hausdorff Center for Mathematics} and by the  FWF-grant Y00782.  The second author is a member of the Gnampa group (INdAM).

\section{Notation and basic facts}\label{sec:notation-basic}

For $d \ge 1$, let $\Sim^d \subseteq \R^{d \times d}$ denote the space of symmetric matrices $a = a^\tau$ and write $a \in \Sim^d_+$ or $a \ge 0$ if $a \in \Sim^d$ is non-negative definite. We endow $\Sim^d$ with the Hilbert-Schmidt norm $|a| :=  \sqrt{\Tr\bra{a^2}}$, $\Tr$ denoting the trace operator. 

Let $C_b([0,1]\times \R^d)$,  $C_b^{1,2}([0,1]\times \R^d)$ be respectively the spaces of continuous functions $\varphi(t,x) = \varphi_t(x)$ on $[0,1]\times \R^d$ and of functions differentiable once with respect to $t$ (and write $\partial_t \varphi$) and twice  with respect to $x$ ($\nabla \varphi$, $\nabla^2 \varphi$) in $(0,1) \times \R^d$, with $\partial_t \varphi, \nabla^2 \varphi$ uniformly bounded. For $r \in [0, +\infty)$, let $B_r$, $\overline{B_r} \subseteq \R^d$ denote respectively the open and closed balls centred at $0$ in $\R^d$ and write $C_b^{1,2}([0,1]\times \overline{B_r})$ for the space of functions differentiable once with respect to $t$  and twice  with respect to $x$  in $(0,1)\times B_r$ with uniformly continuous derivatives, so that they extend to functions in $C^{1,2}_b([0,1]\times \R^d$).

Given $E \subseteq \R^k$ Borel we write $\Prob(E)$ for the set of probability measures on $E$ with finite second moment, endowed with the narrow topology (i.e.\ in duality with bounded continuous functions) together with convergence of second moments). We write $\meas(E; \Sim^d_+)$ for the space of measures $\mu$ on $E$ with values in the vector space $\Sim^d$ such that $\mu(A) \in \Sim^d_+$ for every Borel $A \subset E$, or, equivalently, such that the matrix $\sigma$ in the polar decomposition of $\mu = \abs{\mu} \sigma$ belongs $|\mu|$-a.e.\ to $\Sim^d_+$.  We write $\chi_A$ for the indicator function of a set $A$.

We say that $\mu$, $\nu \in \Prob(\R^d)$ are in convex order if for every convex $\varphi:\R^d \to \R$ one has $\int_{\R^d} \varphi \d \mu  \le \int_{\R^d} \varphi \d \nu$.

\paragraph*{Cost functionals.}
Given $\cc: \Sim^d_+ \to \R \cup \cur{+\infty}$, $a \mapsto \cc(a)$ (or equivalently $\cc: \Sim^d \to \R \cup \cur{+\infty}$ with $\cc(a) = +\infty$ if $a \notin \Sim^d_+$), its Legendre transform $\cc^*:\Sim^d \to  \R \cup \cur{+\infty}$ is defined for $u \in \Sim^d$ as
\[ \cc^*(u) := \sup_{a \in   \Sim^d_+}\cur{ \Tr\bra{au} - \cc(a)}.\]
The map $\cc^*$ is then convex and lower semicontinuous (l.s.c.).  If $\cc$ is strictly convex, then  by  \cite[Theorem 26.3]{rockafellar_convex_1970} $\cc^*$ is continuously differentiable with
\begin{equation}\label{eq:optimality-c-c-star} \cc^*(u) = \Tr\bra{( \nabla \cc^*(u) ) u}- \cc( \nabla \cc^*(u) ) \quad \text{for $u \in \Sim^d$.}\end{equation}
In particular, $\nabla \cc^*(u) \in \Sim_+^d$. 
For $p \in (1,+\infty)$, we say that $\cc: \Sim_d^+ \to \R$ is $p$-{coercive} if there exists $\lambda>0$ such that $\cc(a) \ge \lambda |a|^p$, for $a \in \Sim_d^+$ and that it has $p$-growth if $\cc(a) \le \lambda |a|^p$, for $a \in \Sim_d^+$. The Legendre transform of a $p$-coercive function has $q$-growth and that of a function with $p$-growth is $q$-coercive, with $q = p/(p-1)$. If $\cc$ is strictly convex, from \eqref{eq:optimality-c-c-star} we obtain that if $\cc$ is $p$-coercive then $|\nabla \cc^*|$ has $(q-1)$-growth. Finally, we say that $\cc$ is $p$-admissible if it is strictly convex, $p$-coercive and has $p$-growth.

In this paper, we consider Borel cost functionals  $\cc: (0,1)\times \R^d \times \Sim^d_+ \to \R\cup \cur{+\infty}$, $\cc(t,x,a)$ and their partial Legendre transform with respect to the variable $a \in \Sim^d_+$: all the definitions given above, i.e.\ (strict) convexity, $p$-coercivity, $p$-growth, and $p$-admissability are to be considered with respect to the variable $a$ (or the dual variable $u \in \Sim^d$) and must hold with uniform constants with respect to $(t,x) \in  (0,1)\times \R^d$.

\paragraph*{Martingales.}
Throughout this paper, we consider only stochastic processes with continuous trajectories and always assume the validity of the ``usual conditions'' on filtered probability spaces $(\Omega, \cA, \prob, (\cF)_{t \in [0,1]})$. We also never consider a fixed probability space, but rather allow our optimization to range over all spaces.

A (continuous) real-valued stochastic process $M=(M_t)_{t \in [0,1]}$ defined on a filtered probability space $(\Omega, \cA, \prob, (\cF)_{t \in [0,1]})$ is a martingale if it is adapted, i.e.\ for every $t \in [0,1]$, $M_t$ is $\cF_t$-measurable, $\E[\abs{M_t}]<\infty$, and, for $s<t \in [0,1]$, $\E[M_t | \cF_s] = M_s$. Throughout this paper we consider only square integrable martingales $M$, i.e.\ such that $\E[ |M_1|^2]<\infty$. 

A martingale $M$ has finite quadratic variation \cite[Chapter I, \textsection 2]{ReYo99} if there exists a non-negative adapted process $\ang{M} = (\ang{M}_t)_{t\in [0,1]}$ such that, for any sequence of partitions $(\pi^n)_{n \ge 1}$ of $[0,1]$, whose diameter $\abs{\pi^n} \to 0$ as $n \to +\infty$, then the limit in probability
\[ \lim_{n \to +\infty} \sum_{t_i \in [0,t] \cap \pi^n} \bra{M_{t_i} - M_{t_{i-1}}}^2 = \ang{M}_t\]
holds. By \cite[Theorem~1.3]{ReYo99} (if $M$ is square integrable), $\ang{M}$ exists and is the unique continuous, increasing, adapted process such that $\ang{M}_0 = 0$ and $(M^2_t - \ang{M}_t)_{ t \in [0,1]}$ is a martingale (not necessarily square integrable).

Throughout this paper, we write that  a martingale $M$ has absolutely continuous quadratic variation if for some (progressively measurable) process  $\dot{\ang{M}} =(\dot{\ang{M}}_t)_{t \in [0,1]}$, one has
\[ \ang{M}_t = \int_0^t \dot{\ang{M}}_s \d s, \quad \text{for $t \in [0,1]$,}\]
and write $M \in \ACQV{p}$ if $\E\sqa{\int_0^1 |\dot{\ang{M}}_t|^p \d t}<\infty$. Similar definitions and properties hold for martingales taking values in $\R^d$, arguing componentwise: in particular, the processes $\ang{M}$, $\dot{\ang{M}}$ take values in $\Sim^d_+$.

\paragraph*{Fokker-Planck equations.}

Given $(\varrho_t)_{t \in [0,1]} \subseteq \Prob(\R^d)$ continuous and a Borel map $a: (0,1) \times \R^d \to \Sim^d_+$, we say that the Fokker-Planck equation
\begin{equation}\label{eq:fpe} \partial_t \varrho = \Tr\bra{ \frac12\nabla^2 a \varrho}, \quad \text{in $(0,1) \times \R^d$,} \tag{$\mathsf{FPE}$}\end{equation}
holds if 
 $\int_0^1 \int_{\R^d} \abs{a_t}\d \varrho_t < \infty$ and for $\varphi \in C^{1,2}_b( [0,1]\times \R^d)$ one has
\begin{equation}\label{eq:fpe-test} \int_0^1 \int_{\R^d} \bra{ \partial_t \varphi  +  \Tr\bra{\frac12 a_t \nabla^2 \varphi }} \d \varrho_t \d t = \int_{\R^d} \varphi_1 \d \varrho_1 - \int_{\R^d} \varphi_0 \d \varrho_0.\end{equation}

It is technically useful to extend the notion of Fokker-Planck equation to general measures, essentially defining $\ca := a \varrho$, so that the equation becomes linear. Precisely, given $\varrho \in \Prob( (0,1)\times \R^d )$, $\ca \in \meas( (0,1)\times \R^d; \Sim^d_+)$, we say (with a slight abuse of notation) that $\partial_t \varrho = \Tr\bra{\frac12 \nabla^2 \ca}$ holds in $(0,1) \times \R^d$ if one can disintegrate $\varrho = \int_0^1 \varrho_t \d t$, $\ca = \int_0^1 \ca_t \d t$, and the identity $\int_{(0,1)\times \R^d} \bra{ \partial \varphi \d \varrho +  \Tr\bra{\frac12 \nabla^2 \varphi \d \ca}} = 0$ holds for every $\varphi \in C^{1,2}_b( [0,1]\times \R^d)$ compactly supported in  $(0,1) \times \R^d$.  Notice that, when $\ca$ is absolutely continuous with respect to $\varrho$, then the two given notions coincide, letting $a:= \frac{\d \ca}{\d \varrho}$ be the Radon-Nikodym derivative, up to providing a continuous representative for $(\varrho_t)_{t \in [0,1]}$, which can be always done, arguing analogously as in \cite[Lemma 8.1.2]{AmGiSa08}, see also \cite[Remark 2.3]{trevisan_well-posedness_2016}.

\paragraph*{Martingale problems.} We say that a continuous stochastic process $(X_t)_{t \in [0,1]}$, taking values in $\R^d$, and defined on some filtered probability space  $(\Omega, \cA, \prob, (\cF)_{t \in [0,1]})$ is a solution to the martingale problem \cite[Chapter 6]{stroock_multidimensional_2006} associated to a Borel function  $a: (0,1) \times \R^d \to \Sim^d_+$ (diffusion coefficient) if 
\[  \E\sqa{ \int_0^1 \abs{a_t(X_t)} \d t} < \infty\]
and for every $\varphi \in C^{1,2}_b([0,1]\times \R^d)$ the process
\begin{equation}\label{eq:martingale} t \mapsto \varphi(t, X_t) - \int_0^t \bra{ \partial_t \varphi(s, X_s) + \Tr\bra{ \frac12 a_s(X_s) \nabla^2 \varphi(s, X_s) }} \d s\end{equation}
is a martingale. An application of It\^o's formula shows that its quadratic variation  is 
\[ t \mapsto \int_0^t \bra{ a_s(X_s) \nabla \varphi_s(X_s)} \cdot  \nabla \varphi_s(X_s)  \d s,\]
hence the martingale is in $\ACQV{1}$. Letting $\varphi(x) = x$, then $X$ itself is a martingale, with density of quadratic variation $\dot{\ang{  X}}_t = a_t(X_t)$.

Since the expectation of the martingale \eqref{eq:martingale} is constant, we see that the $1$-marginal laws of $X$, i.e. the (continuous) curve $t\mapsto \varrho_t := (X_t)_\sharp \P \in \Prob(\R^d)$ solves \eqref{eq:fpe}. Moreover, the curve $(\varrho_t)_{t \in [0,1]}$ is increasing with respect to the convex order.

A converse result holds (see \cite[Theorem 2.6]{figalli_existence_2008} for a proof in case of bounded $a$, \cite[Theorem 2.5]{trevisan_well-posedness_2016} for the general case). 

\begin{theorem}\label{thm:sp}
Let  $(\varrho_t)_{t \in [0,1]} \subseteq \Prob(\R^d)$ be continuous and $a: (0,1) \times \R^d \to  \Sim_d^+$ be Borel solving \eqref{eq:fpe}. Then, there exists a continuous process  $(X_t)_{t \in [0,1]}$ defined on some filtered probability space $(\Omega, \cA, \P, (\cF_t)_{t \in [0,1]})$, solving the martingale problem associated to $a$, such that $\varrho_t = (X_t)_\sharp \P$, for every $t \in [0,1]$.
\end{theorem}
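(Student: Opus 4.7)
My plan would follow the second-order superposition principle in the spirit of Ambrosio–Figalli–Trevisan: construct a probability measure $\pi$ on the path space $\Omega = C([0,1];\R^d)$ with canonical filtration, so that the coordinate process $X_t(\omega) = \omega(t)$ has marginals $\varrho_t$ and makes the process in \eqref{eq:martingale} a martingale. The first step is the smooth, uniformly elliptic, bounded case: if $a$ is smooth, bounded, and strictly positive, the SDE $dX_t = \sigma_t(X_t)\, dB_t$ with $\sigma \sigma^\tau = a$ has a unique strong solution with initial law $\varrho_0$, and by It\^o's formula its law $\pi$ solves the martingale problem; uniqueness for \eqref{eq:fpe} in this regime forces its marginals to coincide with $\varrho_t$.

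Next I would reduce the general bounded Borel case to the smooth case by approximation. Mollify $a$ to get smooth $a^{\eps}$, solve the regularized Fokker–Planck equation with initial datum $\varrho_0$ via parabolic theory to obtain a curve $\tilde\varrho^{\eps}_t$, and apply Step 1 to obtain path measures $\pi^{\eps}$. Uniform bounds on $a^{\eps}$ and a Kolmogorov tightness criterion based on the estimate $\E[|X_t - X_s|^4] \lesssim |t-s|^2$ yield a weak limit $\pi$ along a subsequence. The martingale defining relation \eqref{eq:martingale} is linear in $a$, so testing against $\varphi \in C_b^{1,2}([0,1]\times \R^d)$ and using narrow convergence of $\pi^{\eps}$ together with convergence of marginals $\tilde\varrho^{\eps}_t \to \varrho_t$ (which one verifies using that the original $\varrho$ is the unique solution of \eqref{eq:fpe} with bounded $a$ and datum $\varrho_0$, in duality against $C^{1,2}_b$) transfers the martingale property to $\pi$.

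Finally, to handle unbounded $a$ under the standing assumption $\int_0^1 \int_{\R^d} |a_t|\, d\varrho_t\, dt < \infty$, I would truncate by $a^{R} := a\, \chi_{\{|a| \le R\}}$ on a growing ball $B_R$ and glue with, say, a heat-kernel extension outside, together with a stopping/killing of the process at the exit from $B_R$. Applying Step 2 to each truncation produces path measures $\pi^{R}$; the integrability of $|a|$ against $\varrho_t$, combined with Doob-type controls on the quadratic variation of the process in \eqref{eq:martingale} applied to $\varphi(x) = x$, supplies tightness on $\Omega$ and allows passage to the limit $R \to \infty$ to obtain the desired $\pi$. This is where the main obstacle lies: the tightness of $\{\pi^{R}\}$ and the consistency of marginals must both be extracted from the single integrability hypothesis on $a$ against $\varrho$, with no \emph{a priori} pathwise moment bound. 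The trick is to exploit the linearity of \eqref{eq:fpe} in the pair $(\varrho, a\varrho)$ and to work first on the extended path space to extract the limiting measure, then verify a posteriori that its marginals agree with $\varrho_t$ by testing \eqref{eq:fpe-test} against $\varphi \in C^{1,2}_b$.
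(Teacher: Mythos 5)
Your overall architecture — superposition-principle reasoning in the style of Ambrosio–Figalli–Trevisan, with the three reductions from smooth to bounded Borel to merely integrable $a$ — is the right one, and indeed the paper simply cites \cite{figalli_existence_2008} and \cite{trevisan_well-posedness_2016} for precisely this line of argument. However, Step 2 as you describe it contains a genuine gap. You mollify the coefficient $a$ to obtain $a^\eps$, solve the regularized Fokker--Planck equation from the datum $\varrho_0$ to get a new curve $\tilde\varrho^\eps$, and then claim that $\tilde\varrho^\eps \to \varrho$ because $\varrho$ is ``the unique solution of \eqref{eq:fpe} with bounded $a$ and datum $\varrho_0$.'' Uniqueness of solutions to \eqref{eq:fpe} with a general bounded Borel diffusion coefficient is \emph{false}: the FPE with a degenerate or rough $a$ can have multiple probability-valued solutions with the same initial law. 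Even setting uniqueness aside, mollifying $a$ alone gives you no control whatsoever on the distance between $\tilde\varrho^\eps$ and the given $\varrho$, since only $L^1_{\mathrm{loc}}$-type convergence of $a^\eps \to a$ is available, which is far too weak to propagate through the FPE. The same difficulty reappears in Step 3 when you truncate and expect the truncated FPE to select the curve $\varrho$ back.

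The device that makes the argument work in the references (and which the paper itself records in its convolution lemma, Lemma~3.4) is to mollify the \emph{measures}, not the coefficient: set $\tilde\varrho^\eps_t := \varrho_t * \sigma^\eps$ and define the new coefficient as the Radon--Nikodym derivative $a^\eps_t := \mathrm{d}\bigl((a_t \varrho_t) * \sigma^\eps\bigr) / \mathrm{d}\bigl(\varrho_t * \sigma^\eps\bigr)$. Then $(\tilde\varrho^\eps, a^\eps)$ is again a solution of \eqref{eq:fpe}, $a^\eps$ is smooth and locally bounded below, and the marginal curve $\tilde\varrho^\eps_t$ converges narrowly to $\varrho_t$ by construction, so no uniqueness statement for the FPE is needed. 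Moreover, by Jensen the integrability $\int_0^1\int |a^\eps_t|\,\mathrm{d}\tilde\varrho^\eps_t\,\mathrm{d}t \le \int_0^1\int |a_t|\,\mathrm{d}\varrho_t\,\mathrm{d}t$ is preserved, which is exactly what you need to run your Kolmogorov tightness estimate and, in the unbounded case, to control stopping/truncation. With that substitution your remaining steps (existence for smooth nondegenerate $a$, tightness of path measures, passage to the limit in the linear-in-$a$ martingale condition, and a posteriori identification of the marginals) go through as you describe.
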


\section{A Benamou-Brenier type problem}\label{sec:bb}

In this section we introduce and study basic properties of  Benamou-Brenier type problems which naturally appear as length-type functionals with respect to the discrete-time martingale transport cost. Indeed, by introducing suitably normalized ``cumulative'' transport costs, associated to a partition $\pi = \cur{t_0 =0 < \ldots < t_n=1} \subseteq [0,1]$, and investigating their limit as $\nor{\pi} \to 0$, where $\nor{\pi} = \sup_{i=1, \ldots,n} | t_i-t_{i-1}|$, we can show the following result (its proof is postponed to the Appendix~\ref{app:proof}).

\begin{theorem}\label{thm:length relax}
 Let $p \ge 1$, $\cc \in C(\R^d)$ satisfy $|\cc(y)-\cc(x)| \le \lambda (1+|x|^{2p-1} + |y|^{2p-1})|y-x|$ for $x, y \in \R^d$, for some $\lambda\ge 0$. Let $X\in\ACQV{p}$ with $t\mapsto\dot{\qv{X}}_t$ $\prob$-a.s.\ continuous and let $Z$ be a $d$-dimensional  standard normal random variable independent of $X$. Then the following limits hold:
\begin{align}\label{eq:length relax}
 \lim_{\nor{\pi}\to 0} \sum_{t_i\in\pi}  \E\sqa{ \cc\bra{\frac{X_{t_i}-X_{t_{i-1}}}{ \sqrt{t_i- t_{i-1}} }}} (t_i - t_{i-1}) &  = \int_0^1 \E \left[\cc\left(\textstyle{\sqrt{\dot{\qv{X}}}} \, Z\right) \right] ds,\\\label{eq:length relax two}
 \lim_{\nor{\pi}\to 0} \sum_{t_i\in\pi} \abs{ \E\sqa{ \cc\bra{\frac{X_{t_i}-X_{t_{i-1}}}{ \sqrt{t_i- t_{i-1}} }}}}^{1/p} (t_i - t_{i-1})  & = \int_0^1 \abs{\E\left[\cc\left(\textstyle{\sqrt{\dot{\qv{X}}}} \, Z\right)\right]}^{1/p} ds~.
\end{align} 
\end{theorem}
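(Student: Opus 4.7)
The plan is to exploit a local Gaussian approximation for the rescaled increments $(X_{t_i}-X_{t_{i-1}})/\sqrt{t_i-t_{i-1}}$. Using the martingale representation theorem on a possibly enlarged filtered probability space, I would write $X_t - X_0 = \int_0^t \sigma_s\,dB_s$ with $B$ a standard $d$-dimensional Brownian motion and $\sigma_s := (\dot{\qv{X}}_s)^{1/2}$ a progressively measurable process which is $\prob$-a.s.\ continuous in $s$, by continuity of the principal square root on $\Sim^d_+$. Heuristically, on each partition interval $[t_{i-1}, t_i]$ of length $\Delta_i$, the integrand $\sigma_s$ is essentially frozen at $\sigma_{t_{i-1}}$, so that
\[
\frac{X_{t_i}-X_{t_{i-1}}}{\sqrt{\Delta_i}} = \sigma_{t_{i-1}}\, Z_i^\pi + R_i^\pi, \qquad Z_i^\pi := \frac{B_{t_i}-B_{t_{i-1}}}{\sqrt{\Delta_i}},
\]
with $Z_i^\pi$ standard normal and independent of $\cF_{t_{i-1}}$, and remainder $R_i^\pi := \Delta_i^{-1/2}\int_{t_{i-1}}^{t_i}(\sigma_s - \sigma_{t_{i-1}})\,dB_s$.

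The key step is to show that replacing the full rescaled increment with $\sigma_{t_{i-1}} Z_i^\pi$ introduces a vanishing error. From the hypothesis $|\cc(y)-\cc(x)|\le \lambda(1+|x|^{2p-1}+|y|^{2p-1})|y-x|$ and Hölder's inequality with conjugate exponents $2p/(2p-1)$ and $2p$, the difference $\E[\cc(\sigma_{t_{i-1}} Z_i^\pi + R_i^\pi)] - \E[\cc(\sigma_{t_{i-1}} Z_i^\pi)]$ is controlled by $(1+\E[|\sigma_{t_{i-1}}|^{2p}]+\E[|R_i^\pi|^{2p}])^{(2p-1)/(2p)}\E[|R_i^\pi|^{2p}]^{1/(2p)}$. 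The Burkholder--Davis--Gundy inequality then bounds $\E[|R_i^\pi|^{2p}]$ by the average of $|\sigma_s - \sigma_{t_{i-1}}|^{2p}$ over $[t_{i-1},t_i]$, which vanishes as $\|\pi\|\to 0$ by continuity of $\sigma$. Since $Z_i^\pi$ is independent of $\cF_{t_{i-1}}$, conditioning gives $\E[\cc(\sigma_{t_{i-1}} Z_i^\pi)] = \E[\cc(\sqrt{\dot{\qv{X}}_{t_{i-1}}}\,Z)]$ with $Z$ an independent standard normal, so $\sum_i \Delta_i\,\E[\cc(\sigma_{t_{i-1}} Z)]$ is precisely the Riemann sum for $\int_0^1 \E[\cc(\sqrt{\dot{\qv{X}}_s}\,Z)]\,ds$, which converges by continuity of $s \mapsto \dot{\qv{X}}_s$.

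The integrability required to conclude by dominated convergence is supplied by $X\in\ACQV{p}$: a further application of BDG and Jensen's inequality yields
\[
\sup_\pi \sum_i \Delta_i\,\E[|R_i^\pi|^{2p}] \le C_p\, \E\!\int_0^1 |\dot{\qv{X}}_s|^p\,ds < \infty,
\]
together with an analogous uniform bound on $\sum_i \Delta_i\, \E[|\sigma_{t_{i-1}} Z_i^\pi|^{2p}]$, providing the envelope needed for the passage to the limit \eqref{eq:length relax}. The second identity \eqref{eq:length relax two} follows from the same interval-wise convergence of $\E[\cc((X_{t_i}-X_{t_{i-1}})/\sqrt{\Delta_i})]$ to $\E[\cc(\sqrt{\dot{\qv{X}}_s}\,Z)]$ as $t_{i-1} \to s$, by applying the continuous map $y \mapsto |y|^{1/p}$ and invoking Riemann sum convergence with the same dominating majorant. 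The principal obstacle is that $\sigma$ is in general unbounded, so the uniform smallness of $R_i^\pi$ cannot be read off directly from a.s.\ continuity of $\sigma$; this is circumvented by a standard truncation argument coupling this continuity with the $L^{2p}(ds\,d\prob)$ integrability of $\sigma$ ensured by $X \in \ACQV{p}$.
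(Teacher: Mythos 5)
Your proposal follows essentially the same approach as the paper's proof: the same martingale-representation decomposition of the rescaled increment into $\sigma_{t_{i-1}}Z_i^\pi + R_i^\pi$, the same Burkholder--Davis--Gundy control on the remainder $R_i^\pi$, and the same Riemann-sum identification for the leading term $\E\bigl[\cc(\sigma_{t_{i-1}}Z)\bigr]$. The only technical variation is your use of H\"older's inequality to bound $\E[A\,|R_i^\pi|]$ where the paper uses Young's inequality with an $\eps$-parameter; both routes work, though yours implicitly requires a second H\"older over the partition sum to combine the uniform bound with the vanishing of $\sum_i \Delta_i\,\E[|R_i^\pi|^{2p}]$.
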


Clearly, one could consider other  ``cumulative'' transport costs, but a common feature should be that, if $\cc(x,y)$ depends on the difference $y-x$, then letting $x = X_{t}$, $y = X_{s}$ for a continuous martingale $(X_t)_{t \in [0,1]}$, a rescaling factor $\sqrt{t-s}$ should appear. Notice also that, if $\cc$ is an odd function, the right hand side in \eqref{eq:length relax} is identically zero, by independence of $Z$ and $X$.

In view of Theorem~\ref{thm:length relax}, we introduce the following minimization problem, as a continuous-time martingale optimal transport problem. For a cost functional $\cc: [0,1]\times \R^d \times \Sim^d_+ \to \R \cup \cur{+\infty}$ and $\mu$, $\nu \in \Prob(\R^d)$ we set
\begin{equation}
\label{eq:bb-cost-process}
\costM(\mu, \nu) := \inf \cur{ \int_0^1 \esp{ \cc( t, X_t, \dot{\ang{ X}}_t)} \d t \colon  X \in \ACQV{},  (X_0)_\sharp \P =  \mu, \,   (X_1)_\sharp \P = \nu },
\end{equation}
which can be interpreted as the infimum  over the length of martingale curves connecting $\mu$ to $\nu$, where the length is the integral of the ``speed'' $\cc(t, X_t,\dot{\ang{ X}}_t)$. Notice that also the filtered probability space $(\Omega, \cA, \P, (\cF_t)_{t \in [0,1]})$ where $(X_t)_{t \in [0,1]}$ is defined is allowed to vary in the formulation above. This problem is a ``martingale'' analogue of the Benamou-Brenier dynamical formulation of optimal transport \cite{BeBr99}.  

The choice of looking at martingales defined on the time interval $[0,1]$ is arbitrary in \eqref{eq:bb-cost-process}.

%
%
%

\begin{lemma}[time-changes and unit speed geodesics]\label{rem:time-rescaling}
 Assume that the cost function $\cc(t,x, \cdot)$ is $p$-homogeneous for some $p >1$, i.e. $\cc(t,x, \lambda a) = \lambda^p \cc(t,x,a)$ for $\lambda \ge 0$. Then, for any $T>0$ there holds
\[ \bra{ \costM(\mu, \nu) }^{1/p}= \inf \cur{ \int_	0^T \abs{ \esp{ \cc( t, X_t, \dot{\ang{ X}}_t)} }^{1/p} \d t \colon  X \in \ACQV{},  (X_0)_\sharp \P =  \mu, \,   (X_T)_\sharp \P = \nu }. \]
Moreover, letting $\varrho_t:=(X_t)_\sharp\prob$, one has $\bra{\costM(\varrho_s, \varrho_t)}^{1/p} = \abs{t-s} \bra{ \costM(\varrho_0, \varrho_1)}^{1/p}$, i.e.\ optimizers of \eqref{eq:bb-cost-process} are unit speed geodesics.
\end{lemma}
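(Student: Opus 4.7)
My plan is to combine Jensen's inequality in time with a deterministic monotone time-change of the martingale, exploiting the $p$-homogeneity of $\cc(t,x,\cdot)$. Throughout, I will write $J_T(X) := \int_0^T \esp{\cc(t, X_t, \dot{\qv{X}}_t)} \d t$ and $L_T(X) := \int_0^T |\esp{\cc(t, X_t, \dot{\qv{X}}_t)}|^{1/p} \d t$ for a martingale $X \in \ACQV{}$ defined on $[0,T]$ (implicitly extending $\cc$ in the $t$-slot, or assuming time-autonomy, when $T > 1$). Applying Jensen to $x \mapsto x^p$ on $[0,T]$ yields $L_T(X)^p \leq T^{p-1} J_T(X)$ with equality iff the integrand is constant in $t$; taking $T = 1$ gives $\inf_X L_1(X) \leq \costM(\mu,\nu)^{1/p}$, and the linear rescaling $s \in [0,T] \mapsto s/T \in [0,1]$ together with $p$-homogeneity in $a$ shows that $\inf L_T$ is independent of $T$, so the length upper bound holds for every $T > 0$.

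For the reverse direction I would construct, from any admissible $X$ on $[0,T]$, a constant-speed competitor on $[0,1]$ achieving Jensen equality. Set $\ell(t) := |\esp{\cc(t,X_t,\dot{\qv{X}}_t)}|^{1/p}$ and $L := L_T(X)$ (the case $L = 0$ being trivial), let $\sigma(t) := \int_0^t \ell(s) \d s$, and let $\tau : [0,L] \to [0,T]$ be a right-inverse of $\sigma$. Then $Y_u := X_{\tau(u)}$ is a martingale in the time-changed filtration $\cF_{\tau(u)}$ with density $\dot{\qv{Y}}_u = \tau'(u) \dot{\qv{X}}_{\tau(u)} = \dot{\qv{X}}_{\tau(u)}/\ell(\tau(u))$; by $p$-homogeneity, a direct computation yields $\esp{\cc(\cdot, Y_u, \dot{\qv{Y}}_u)} = 1$ for a.e.\ $u \in [0,L]$. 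Rescaling linearly via $Z_s := Y_{sL}$ multiplies the quadratic variation density by $L$, so another application of $p$-homogeneity gives $J_1(Z) = L^p = L_T(X)^p$, yielding $\costM(\mu,\nu) \leq L_T(X)^p$ and, after minimizing, the first identity.

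For the unit-speed geodesic statement, I would first notice that any $J_1$-optimizer $X$ saturates Jensen, so its speed $\ell$ is constant and equal to $\costM(\varrho_0,\varrho_1)^{1/p}$. For $0 \leq s < t \leq 1$, the affine restriction $Y_r := X_{s + r(t-s)}$ is a martingale on $[0,1]$ with marginals $\varrho_s,\varrho_t$ and density $\dot{\qv{Y}}_r = (t-s)\dot{\qv{X}}_{s+r(t-s)}$, so by $p$-homogeneity $J_1(Y) = (t-s)^p \costM(\varrho_0,\varrho_1)$, giving the upper bound $\costM(\varrho_s,\varrho_t)^{1/p} \leq (t-s) \costM(\varrho_0,\varrho_1)^{1/p}$. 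The matching lower bound comes from the triangle inequality
\[ \costM(\varrho_0,\varrho_1)^{1/p} \leq \costM(\varrho_0,\varrho_s)^{1/p} + \costM(\varrho_s,\varrho_t)^{1/p} + \costM(\varrho_t,\varrho_1)^{1/p}, \]
obtained by concatenating $\eps$-approximate optimizers on $[0,s], [s,t], [t,1]$ via regular conditional distributions with matching intermediate laws and applying the first identity to each piece. Combined with the analogous upper bounds on all three sub-costs, this forces equality in each summand.

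The hardest part will be the time-change step. I will need to check that a deterministic monotone reparametrization of a martingale is again a martingale (standard, after passing to the filtration $\cF_{\tau(\cdot)}$), and to handle the degenerate case $\ell = 0$ on a set of positive Lebesgue measure (the naive right-inverse of $\sigma$ is then not continuous, but one can approximate $\ell$ by $\ell + \eps$, construct the reparametrized martingale, and pass to the limit $\eps \to 0^+$, using the growth/coercivity of $\cc$ for the relevant semicontinuity). A minor conceptual point is the interpretation of $\cc$ when $T > 1$, which is the natural regime for a length-type functional and is consistent provided $\cc$ is time-autonomous or has been extended in a compatible way.
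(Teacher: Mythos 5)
Your proof is correct and follows the same time-change-plus-Jensen route that the paper only sketches (deferring the computations to \cite[Theorem 5.4]{DoNaSa09} and noting only how the scaling of the quadratic variation enters). A small but worthwhile observation: your arc-length reparametrization, taking $\tau$ to be the inverse of $\sigma(t)=\int_0^t \abs{\esp{\cc(s,X_s,\dot{\qv{X}}_s)}}^{1/p}\d s$, is the correct choice that produces a unit-speed competitor via $p$-homogeneity; the paper's one-line sketch writes the time change as the inverse of $t\mapsto\int_0^t \esp{\cc(r,X_r,\dot{\qv{X}}_r)}\d r$ (without the $1/p$-power), which by itself does not equalize the speed, so your version makes precise what the paper delegates to \emph{``after some computations.''} You also correctly flag the one genuine subtlety that the paper leaves implicit: both the linear rescaling $s\mapsto s/T$ and the arc-length reparametrization shift the $t$-argument of $\cc$, so the stated $T$-independence and the geodesic identity require $\cc$ to be time-autonomous (or the original time to be carried along in the first slot); for a genuinely $t$-dependent cost the claimed identities fail in general. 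The remaining technical points you raise --- martingality under a deterministic monotone time change in the filtration $\cF_{\tau(\cdot)}$, the $\ell+\eps$ approximation when $\ell$ vanishes on a set of positive measure, and the gluing of $\eps$-optimizers on $[0,s]$, $[s,t]$, $[t,1]$ through regular conditional probabilities to get the triangle inequality for $\costM^{1/p}$ --- are exactly the right things to check and are handled correctly.
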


The key observation for the first part of Lemma \ref{rem:time-rescaling} is that for a deterministic time change $\alpha$, the quadratic variation of $(X_{\alpha(t)})_t$ satisfies $\dot {\ang{X_\alpha}}_t = \dot {\alpha}_t \dot {\ang{X}}_{\alpha(t)}$ so that the cost on different intervals changes in a controlled way. Choosing $\alpha$ as the inverse of $t \mapsto \int_0^t  \esp{ \cc( r, X_r, \dot{\ang{ X}}_r)} \d r$ yields (after some computations) the result. Combining this with time rescaling yields the second assertion. We refer to \cite[Theorem 5.4]{DoNaSa09} for a complete derivation in the deterministic setting. One only needs to adapt the scaling for the quadratic variation in the time change argument.

To address well-posedness of \eqref{eq:bb-cost-process}, we introduce a second minimization problem, defined over curves of probability measures solving Fokker-Planck equations, for which we show equivalence with \eqref{eq:bb-cost-process} for $p$-admissible  costs.  
Indeed, if $X = (X_t)_{t \in [0,1]}$ is a solution to the martingale problem associated to some diffusion coefficient $a$, then $\dot{\ang{X}}_t =  a_t(X_t)$ and we rewrite
\[ \int_0^1 \E[ \cc(t,X_t, \dot { \ang{ X}}_t)]  \d t =\int_0^1  \int_{\R^d} \cc(t, x,  a_t(x) ) \d \varrho_t(x) \d t ,\] where $\varrho_t = (X_t)_{\sharp} \prob$ is the marginal law of $X$ at time $t$. It follows that the functional in \eqref{eq:bb-cost-process} actually depends on $(\varrho_t)_{t \in [0,1]}$ and $a$ only, which are related by \eqref{eq:fpe}. We are led by this consideration to introduce the following minimization problem:
\begin{equation}
\label{eq:bb-cost-pde}
\costFP(\mu, \nu) := \inf \cur{ \int_0^1 \int_{\R^d} \cc(t, x, a_t(x)) \d \varrho_t(x)  \d t \colon \partial_t \varrho = \Tr\bra{\frac12 \nabla^2 a \varrho},\,  \varrho_0 =\mu, \,   \varrho_1 = \nu }.
\end{equation}

Inequality $\costM(\mu, \nu) \le \costFP(\mu, \nu)$ always holds. Indeed, by Theorem~\ref{thm:sp}, any solution $(\varrho_t)_{t \in [0,1]}$ to \eqref{eq:fpe} can be lifted to a solution $(X_t)_{t \in [0,1]}$ to the martingale problem associated to $a$, on some filtered probability space $(\Omega, \cA, (\cF_t)_{t \in [0,1]}, \prob)$, and the consideration above, which led to the introduction of \eqref{eq:bb-cost-pde}, applies. To prove the converse inequality, hence equality $\costFP = \costM$, we essentially argue that, given any martingale $(X_t)_{t \in [0,1]}$ one can always find a solution to some martingale problem (possibly on a larger space) with the same marginals and smaller transport cost, provided that $\cc$ is $p$-admissible. The  argument is a variant of  \cite[Theorem~8.3.1]{AmGiSa08} in the martingale setting.

\begin{theorem}\label{thm:markov-projection}
For $p \in (1,+\infty)$ let $c: [0,1]\times \R^d \times \Sim_d^+ \to \R \cup \cur{+\infty}$ be  $p$-admissible, let $X \in \ACQV{p}$ and set $\varrho_t = (X_t)_\sharp \P$, for $t \in [0,1]$. Then, there exists $a: [0,1]\times \R^d \to \Sim_d^+$ such that $(\varrho, a)$ satisfy \eqref{eq:fpe}  and
\begin{equation}\label{eq:projection-martingale-mp}  \int_0^1  \int_{\R^d} c(t, x, a_t(x)) \d \varrho_t(x) \le \int_0^1 \esp{c( t, X_t, \dot{\ang{ X}}_t)} \d t .\end{equation}
In particular, the identity  $\costM(\mu, \nu) = \costFP(\mu, \nu)$ holds for every $\mu$, $\nu \in \Prob(\R^d)$.
\end{theorem}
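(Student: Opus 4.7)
The inequality $\costM(\mu,\nu) \le \costFP(\mu,\nu)$ has already been justified in the paragraph preceding the statement via Theorem~\ref{thm:sp}, so the content of the theorem is the Markov-projection inequality \eqref{eq:projection-martingale-mp}. My plan is to follow the classical ``mimicking'' strategy (in the spirit of Gyöngy / Brunick--Shreve and \cite[Theorem~8.3.1]{AmGiSa08}): given $X \in \ACQV{p}$, define the projected diffusion coefficient
\[ a_t(x) := \E\!\left[ \dot{\qv{X}}_t \,\big|\, X_t = x \right], \qquad (t,x)\in(0,1)\times\R^d, \]
which is $\Sim_d^+$-valued $\varrho_t$-a.e.\ since $\dot{\qv{X}}_t\in\Sim_d^+$ a.s. A measurable disintegration (or a joint version built from regular conditional probabilities on the product space, using that $(t,\omega)\mapsto (X_t(\omega),\dot{\qv{X}}_t(\omega))$ is progressively measurable) yields a Borel map $a:(0,1)\times\R^d\to\Sim_d^+$.

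\textbf{Verification of \eqref{eq:fpe}.} Since $X$ is a continuous square-integrable martingale with absolutely continuous quadratic variation, It\^o's formula applied to any $\varphi\in C^{1,2}_b([0,1]\times\R^d)$ gives
\[ \varphi(1,X_1)-\varphi(0,X_0) = \int_0^1 \nabla\varphi(t,X_t)\cdot dX_t + \int_0^1 \Bigl(\partial_t\varphi(t,X_t) + \tfrac12\Tr\bigl(\dot{\qv{X}}_t\,\nabla^2\varphi(t,X_t)\bigr)\Bigr)\,dt. \]
Taking expectations kills the stochastic integral (using that $\nabla\varphi$ is bounded and $\E\int_0^1|\dot{\qv{X}}_t|\,dt<\infty$), and the tower property together with the definition of $a$ gives
\[ \E\bigl[\Tr(\dot{\qv{X}}_t\,\nabla^2\varphi(t,X_t))\bigr] = \E\bigl[\Tr(a_t(X_t)\,\nabla^2\varphi(t,X_t))\bigr] = \int_{\R^d}\Tr(a_t\,\nabla^2\varphi_t)\,d\varrho_t. \]
This is exactly \eqref{eq:fpe-test}. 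The integrability $\int_0^1\!\int_{\R^d}|a_t|\,d\varrho_t<\infty$ is immediate from Jensen and $X\in\ACQV{p}\subseteq\ACQV{1}$.

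\textbf{Cost inequality.} Since $\cc(t,x,\cdot)$ is convex on $\Sim_d^+$ (by $p$-admissibility), the conditional Jensen inequality yields, for a.e.\ $t$,
\[ \cc\bigl(t,X_t,a_t(X_t)\bigr) = \cc\bigl(t,X_t,\E[\dot{\qv{X}}_t\mid X_t]\bigr) \le \E\!\left[\cc(t,X_t,\dot{\qv{X}}_t)\,\big|\,X_t\right]\quad\text{a.s.} \]
Taking total expectation and integrating in $t$ gives \eqref{eq:projection-martingale-mp}, and the identity $\costM(\mu,\nu)=\costFP(\mu,\nu)$ follows by taking the infimum on both sides, combined with the already-known reverse inequality.

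\textbf{Main obstacles.} The main technical point is making the definition of $a_t(x)$ a genuine Borel function of $(t,x)$ rather than only a $\varrho_t$-equivalence class for each $t$; this requires picking a jointly measurable version of the conditional expectation, which is standard but needs care. A secondary point is justifying the use of Jensen for the matrix-valued random variable $\dot{\qv{X}}_t$: since $\cc(t,x,\cdot)$ is convex and l.s.c.\ on the finite-dimensional cone $\Sim_d^+$, and $\E|\dot{\qv{X}}_t|^p<\infty$ guarantees the relevant conditional expectations are well defined, the inequality applies without issue. Everything else ($p$-growth, strict convexity) is only needed to ensure both sides of \eqref{eq:projection-martingale-mp} are meaningful and that the resulting $a$ yields a bona fide element feasible for \eqref{eq:bb-cost-pde}.
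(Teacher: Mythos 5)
Your proof is correct, but it takes a genuinely different route from the paper. You define the diffusion coefficient directly as the conditional expectation $a_t(x) = \E[\dot{\qv{X}}_t \mid X_t = x]$, verify \eqref{eq:fpe} via It\^o and the tower property, and obtain \eqref{eq:projection-martingale-mp} by conditional Jensen --- this is the classical Markov-projection (Gy\"ongy / Brunick--Shreve mimicking) argument. The paper instead introduces the linear functional $\cL\varphi = \int \varphi_1\,d\varrho_1 - \int\varphi_0\,d\varrho_0 - \int\partial_t\varphi\,d\varrho$, shows by It\^o and H\"older that it extends continuously to the $L^q(\varrho;\Sim^d)$-closure $\overline{\cV}$ of the Hessian subspace, minimizes the convex coercive functional $u\mapsto\int\cc^*(\cdot,\tfrac12 u)\,d\varrho - \tilde\cL u$ over $\overline{\cV}$, and reads off $a=\nabla_u\cc^*(\cdot,\tfrac12\bar u)$ from the Euler--Lagrange equation at the minimizer, using \eqref{eq:optimality-c-c-star} to bound the cost. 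Your argument is shorter and more elementary, and in fact only requires convexity of $\cc(t,x,\cdot)$ rather than full $p$-admissibility (strict convexity, coercivity and growth are not used); the cost you pay is a small measurability bookkeeping step to obtain a jointly Borel version of the conditional expectation, which you correctly flag. The paper's variational route needs strict convexity and $p$-coercivity to make the Legendre machinery work, but it buys a structural payoff that anticipates Section~\ref{sec:duality}: the constructed $a$ is of the form $\nabla_u\cc^*(\cdot,\tfrac12\bar u)$ with $\bar u$ an $L^q$-limit of Hessians $\nabla^2\varphi$, which is exactly the shape of the optimality condition \eqref{eq:a-optimal-phi} appearing in the duality theory. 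So both proofs establish the theorem; yours is cleaner as a standalone argument, the paper's is tailored to feed into the subsequent duality and geodesic-equation analysis.
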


\begin{proof}
Let $\mathcal{E}$ denote the right hand side in \eqref{eq:projection-martingale-mp}, which is finite by the assumption $X \in \ACQV{p}$. We introduce the linear functional $\cL$, on $C^{1,2}_b ([0,1] \times \R^d)$,
\begin{equation}\label{eq:def-l-projection-martingale}\cL \varphi  := \int_{\R^d} \varphi(1, x) \d \varrho_1 - \int_{\R^d} \varphi(0, x) \d \varrho_0(x) - \int_0^1 \int_{\R^d} \partial_t \varphi (t, x) \d \varrho_t (x) \d t,\end{equation}
and we notice that
\[ \begin{split} \cL \varphi  & =  \esp{ \varphi(1, X_1) - \varphi(0, X_0) -  \int_0^1  \partial_t \varphi (t, X_t)  \d t} \quad \text{since $\varrho_t = (X_t)_\sharp \P$,} \\
& =\esp{  \int_0^1  \nabla \varphi (t, X_t)  \d X_t +  \int_0^1  \Tr\bra{\frac12 \dot { \ang{ X}}_t  \nabla^2 \varphi (t, X_t)} \d t} \quad \text{by It\^o's formula,}\\
& = \esp{\int_0^1    \Tr\bra{\frac12 \dot{\ang{ X}}_t  \nabla^2 \varphi (t, X_t) }\d t} \quad \text{for the stochastic integral is a martingale.}\\
\end{split}\]
By H\"older's inequality, we deduce with $q=\frac{p}{p-1}$,
\[ \begin{split} \abs{ \cL \varphi} &  \le \frac12 \esp{\int_0^1  \abs{ \dot { \ang{ X}}_t } \abs{ \nabla^2 \varphi (t, X_t)} \d t} \le  \esp{\int_0^1  \abs{ \dot { \ang{ X}}_t }^p \d t}^{1/p}  \esp{\int_0^1  \abs{ \nabla^2 \varphi(t, X_t) }^q \d t}^{1/q}\\
& = \norm{  \dot {\ang{X}}}_{L^p} \bra{\int_0^1 \int_{\R^d} \abs{\nabla^2 \varphi(t, x) }^q \d \varrho_t \d t}^{1/q} = \norm{ \dot {\ang{X}}}_{L^p} \norm{ \nabla^2 \varphi }_{L^q(\varrho)}.\end{split}\]
where $L^q(\varrho)$ denote the Lebesgue spaces with respect to $\int_0^1\varrho_t \d  t$.
As a consequence, the linear functional $\tilde{\cL}(\nabla^2 \varphi) := \cL\varphi$ is actually well-defined and continuous  on the space
\[ \cV := \cur{ \nabla^2 \varphi \colon \varphi \in C^{1,2}_b ([0,1] \times \R^d) } \subseteq L^q( \varrho; \Sim^d),\]
and extends by continuity to the closure $\overline{\cV}\subseteq L^q( \varrho; \Sim^d)$. The inequality
\[ \begin{split}  \cL \varphi & = \esp{\int_0^1\Tr\bra{   \dot { \ang{ X}}_t \frac12 \nabla^2 \varphi (t, X_t)} \d t}  \\
& \le \esp{\int_0^1   c\bra{t, X_t, \dot { \ang{ X}}_t}  +  \cc^*\bra{t, X_t, \frac12 \nabla^2 \varphi (t, X_t)} \d t}\\
& = \cE +  \int_{(0,1)\times \R^d}\cc^*\bra{y, \frac12\nabla^2 \varphi(y)} \d \varrho(y)
\end{split}\]
by continuity (for $\cc^*$ has $q$-growth) extends to
\begin{equation}\label{eq:upper-bound-projection-proof}\int_{[0,1]\times \R^d} \cc^*\bra{y,\frac12 u(y)} \d {\varrho}(y) - \tilde{\cL}u \ge - \cE, \quad \text{for every $u \in \overline{\cV}$.}\end{equation}
Moreover, since $\cc^*$ is $q$-coercive, the functional 
\[ 
 \int_{[0,1]\times \R^d}\cc^*\bra{y,  \frac12 u(y)} \d \varrho- \tilde{\cL} u \]
is convex and coercive on $\overline{\cV}$, hence it attains its minimum at some $\bar{u} \in \overline{\cV}$. Using the fact that $\nabla_u \cc^*$ exists, is continuous and has $(q-1)$-growth, the optimality condition reads
\[ \int_{[0,1]\times \R^d} \Tr\bra{ \frac12 \bra{ \nabla_{u}\cc^*\bra{y, \frac12\bar{u}(y)}}  u (y) } \d{\varrho}(y)= \tilde{\cL} u \quad \text{ for every  $u \in \overline{\cV}$.}\]
Letting $u  = \nabla^2 \varphi$ for $\varphi \in C^{1,2}_b ([0,1] \times \R^d)$, recalling that $\tilde \cL u =\cL \varphi$ and \eqref{eq:def-l-projection-martingale}, we deduce that \eqref{eq:fpe} holds with 
\[  a_t(x) := \nabla_{u}\cc^*\bra{t,x, \frac12\bar{u}(t,x)}. \]
Finally, choosing $u = \bar{u} \in \overline{\cV}$ and using \eqref{eq:upper-bound-projection-proof}, we deduce
\[  \int_{[0,1]\times \R^d}\Tr\bra{ \bra{ \nabla_{u}\cc^*\bra{y, \frac12\bar{u}(y)}} \frac12\bar u (y)}  \d{\varrho}(y) \le \cE + \int_{[0,1]\times \R^d}\cc^*\bra{y, \frac12\bar{u} (y)} \d{\varrho}(y),\]
hence, by \eqref{eq:optimality-c-c-star}, we conclude that
\begin{align*}
& \int_{[0,1]\times \R^d} c\bra{y, a(y) } \d  \varrho(y)\\
 =&  \int_{[0,1]\times \R^d} \Tr\bra{ \bra{\nabla_{u}\cc^*\bra{y, \frac12\bar{u}(y)} } \frac12\bar u (y) } -\cc^*\bra{y, \frac12\bar{u} (y)}  \d{\varrho}(y)  \le \cE. \ \qedhere
\end{align*} 
\end{proof}

\begin{remark}
To show that the  inequality \eqref{eq:projection-martingale-mp} may be strict, e.g.\ consider  the case $d=1$, $c(t,x,a) = a^2$ and let $(\Omega, \cA, (\cF_t)_{t \in [0,1]}, \P)$ be a filtered probability space where a real valued Brownian motion $(B_t)_{t \in [0,1]}$ and  a  $\cF_0$-measurable, uniform random variable $Z$ with values on $\cur{1,2}$ are defined. The martingale $X_t = Z B_t$ belongs to  $\ACQV{2}$ with $\dot{\ang{ X}}_t = Z^2$, and 
\[\varrho_t (\d x) = \frac 1 2\bra{ \varrho^1_t(x) +  \varrho^2_t(x) }\d x,\]
with $\varrho^1_t$ and $\varrho^2_t$ centered Gaussian densities of variances respectively $t$ and $4t$. The proof of Theorem~\ref{thm:markov-projection} gives then
\[ a(t,x) = \frac{\varrho^1_t (x) + 4 \varrho^2_t(x)}{\varrho^1_t (x) + \varrho^2_t(x)} = s + 4 (1-s),\]
with $s(t,x) = \varrho^1_t (x) / \varrho^1_t (x) + \varrho^2_t(x) \in (0,1)$, so that  Jensen's inequality gives $|a(t,x)|^2 < s + 16 (1-s)$. By integration with respect to $\varrho_t(x) \d x \d t$ we conclude that the inequality \eqref{eq:projection-martingale-mp} is strict.
\end{remark}

We discuss here two straightforward properties of the costs \eqref{eq:bb-cost-process} and \eqref{eq:bb-cost-pde}, namely localization and behaviour with respect to convolution.

To localize, e.g.\ on a ball $B_r$ of radius $r>0$, given a martingale $X$ we introduce the stopping time (exit time)
\[\tau_r(X) :=\inf\cur{t\geq 0 \, : \, X_t \notin B_r}\]
We introduce then the ``discounted'' cost 
\[ 
\costM^r(\mu, \nu) := \inf \cur{ \esp{  \int_0^{\tau_r(X)} \cc( t, X_t, \dot{\ang{ X}}_t) \d t } \colon  X \in \ACQV{},  (X_0)_\sharp \prob =  \mu, \,   (X_1)_\sharp \prob = \nu }.
\]
for which the following result holds (but see also Remark \ref{rem:local}).

\begin{lemma}[localization]
Let $\cc:(0,1)\times \R^d\times \Sim_+^d\to [0,\infty]$ satisfy $\cc(t,x,0)=0$ and let $\mu$, $\nu \in \Prob(\R^d)$. Then
$$\lim_{r\to\infty} \costM^r(\mu, \nu)  = \sup_{r>0} \costM^r(\mu, \nu) = \costM(\mu,\nu),$$
\end{lemma}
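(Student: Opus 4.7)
I would prove the three assertions in order: the equality $\lim_{r \to \infty} \costM^r = \sup_r \costM^r$, the upper bound $\sup_r \costM^r \le \costM$, and the lower bound $\sup_r \costM^r \ge \costM$. The first two are immediate. Monotonicity of $r \mapsto \costM^r$ follows because for any competitor $X$, $\cc \ge 0$ together with $\tau_r(X) \le \tau_{r'}(X)$ gives $\esp{\int_0^{\tau_r(X) \wedge 1}\cc\d t} \le \esp{\int_0^{\tau_{r'}(X) \wedge 1}\cc\d t}$, a property preserved under infimum. Similarly, each truncated integrand is dominated by $\int_0^1 \cc\d t$, so $\costM^r \le \costM$.

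For the lower bound, I would fix $\epsilon > 0$ and, for each $r$, pick $X^r$ with $\esp{\int_0^{\tau_r(X^r)\wedge 1}\cc\d t} \le \costM^r + \epsilon \le \sup_{r'} \costM^{r'} + \epsilon$. The idea is to use the assumption $\cc(t,x,0)=0$ to convert $X^r$ into an unrestricted competitor for $\costM$ at essentially the same cost, via stopping. Set $Y^r_t := X^r_{t \wedge \tau_r(X^r)}$. Then $\dot{\qv{Y^r}}_t = 0$ for $t > \tau_r(X^r)$, so
\[
\esp{\int_0^1 \cc(t, Y^r_t, \dot{\qv{Y^r}}_t)\d t} = \esp{\int_0^{\tau_r(X^r) \wedge 1} \cc(t, X^r_t, \dot{\qv{X^r}}_t)\d t} \le \sup_{r'}\costM^{r'} + \epsilon.
\]
The marginals of $Y^r$ are $\mu$ at time $0$ and $\nu^r := \mathrm{Law}(X^r_{\tau_r(X^r)\wedge 1})$ at time $1$. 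Doob's $L^2$ maximal inequality applied to $X^r$ (with $(X^r_1)_\sharp\prob = \nu$) gives $\prob(\tau_r(X^r)<1) \le 4\int |x|^2 \d\nu / r^2 \to 0$, and on the complementary event $X^r_{\tau_r \wedge 1} = X^r_1 \sim \nu$; hence $\nu^r \to \nu$ narrowly and in second moments.

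The last step is to pass to the limit $r \to \infty$ along the sequence $(Y^r)$. The cleanest route is via the PDE reformulation: by Theorem~\ref{thm:markov-projection}, each $Y^r$ yields a Fokker-Planck pair $(\varrho^r, a^r)$ with marginals $\mu, \nu^r$ and with cost $\int\!\!\int \cc(t,x,a^r_t)\d\varrho^r\d t$ bounded by $\sup_{r'}\costM^{r'}+\epsilon$. Tightness of $(\varrho^r)$ (from the second moment bounds on $\mu, \nu^r$) and of $(a^r \varrho^r)$ (from coercivity of $\cc$) yields, along a subsequence, a limit $(\varrho^\ast, \ca^\ast)$ solving \eqref{eq:fpe} with marginals $\mu, \nu$; standard lower semicontinuity of convex integral functionals against narrowly convergent measures (Buttazzo--Freddi type) gives $\int\!\!\int \cc(t,x,a^\ast_t)\d\varrho^\ast \le \liminf_r \int\!\!\int \cc \d\varrho^r \le \sup_r\costM^r + \epsilon$, so $\costM = \costFP \le \sup_r\costM^r + \epsilon$, and letting $\epsilon \to 0$ finishes the proof.

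The main obstacle is this final lower semicontinuity step: it is where one needs structural hypotheses on $\cc$ (convexity in $a$ and enough coercivity to get tightness of $a^r \varrho^r$) that go beyond the bare condition $\cc(t,x,0) = 0$. I expect this is the content of the forward reference to Remark~\ref{rem:local}, which presumably clarifies the exact standing assumptions on $\cc$ under which the equality holds.
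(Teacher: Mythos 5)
Your route is genuinely different from the paper's, which disposes of the lemma in one line: for any fixed $X$, continuous paths on $[0,1]$ are bounded, so $\tau_r(X)\wedge 1 \nearrow 1$ a.s., and since $\cc\ge 0$ monotone convergence gives $\esp{\int_0^{\tau_r(X)\wedge 1}\cc\d t}\nearrow\esp{\int_0^1\cc\d t}$. This yields your first two steps (monotonicity of $r\mapsto\costM^r$ and $\costM^r\le\costM$) and the paper then asserts that ``the limit follows from monotone convergence.'' As you correctly sense, that phrase silently interchanges $\sup_r$ with $\inf_X$, which pointwise monotone convergence alone does not justify; so the paper's proof is at best terse on the lower bound $\sup_r\costM^r\ge\costM$, which is exactly the part you try to make rigorous.

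Your substitute for that step is sound in outline — stop a near-optimizer $X^r$ at $\tau_r$ (where $\cc(t,x,0)=0$ guarantees no extra cost), pass to a Markov solution via Theorem~\ref{thm:markov-projection}, then use tightness and lower semicontinuity — but, as you flag, it imports structural hypotheses on $\cc$ ($p$-admissibility: strict convexity and $p$-coercivity in $a$) needed both for the Markov projection and for the tightness/lsc of $a^r\varrho^r$, whereas the lemma assumes only $\cc\ge 0$ and $\cc(t,x,0)=0$. Your guess that Remark~\ref{rem:local} supplies the missing assumptions is not correct: that remark makes an unrelated (negative) point, namely that stopping an optimizer does not generally produce an optimizer of the localized problem. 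Two smaller remarks: first, the observation that $Y^r$ connects $\mu$ to $\nu^r\ne\nu$ means you are really bounding $\costM(\mu,\nu^r)$, so the limiting step is genuinely doing work and cannot be elided; second, ``$\nu^r\to\nu$ in second moments'' is not automatic from Doob's $L^2$ inequality (that would require $r^2\,\prob(\tau_r(X^r)<1)\to 0$, which needs slightly more than a finite second moment of $\nu$ since $X^r$ varies with $r$) — but narrow convergence together with the uniform bound $\int|x|^2\d\nu^r\le\int|x|^2\d\nu$, which does hold by optional stopping, suffices for your tightness argument.
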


\begin{proof}
If $X \in \ACQV{}$, then $\dot{\ang{X^{r}_t}} = \dot{\ang{X}} \chi_{t \le \tau_r(X)}$ hence the sequence $\costM^r(\mu, \nu)$ is increasing and bounded by $\costM(\mu,\nu)$, and the limit follows from monotone convergence. 
\end{proof}

Given a measure $\sigma \in \Prob(\R^d)$, one can easily prove that, if $\cc(t,x,a) = \cc(t, a)$ does not depend on $x \in \R^d$, then the convolution operation with $\sigma$ is a contraction of the cost $\costM$, i.e.\
\begin{equation}\label{eq:convolution-1} \costM(\mu \conv \sigma, \nu \conv \sigma) \le  \costM(\mu, \nu) \quad \text{for $\mu$, $\nu \in \Prob(\R^d)$.}\end{equation}
Indeed, given any martingale $X \in \ACQV{}$ with $(X_0)_\sharp \prob =  \mu$, $(X_1)_\sharp \prob = \nu$, we may enlarge the filtration so that there exists a random variable $Y$ independent of $X$ and $\cF_0$-measurable, with law $\sigma$. Then, the process $Z_t := X_t + Y$ is a martingale, with $\ang{Z} = \ang{X}$ and with law at $0$ (respectively, at $1$) given by $\mu \conv \sigma$ (respectively, $\nu \conv \sigma$). Hence,
\[ \int_0^{1}  \esp{ \cc( t, \dot{\ang{ Z}}_t)}  \d t  = \int_0^{1}  \esp{ \cc( t, \dot{\ang{ X}}_t)}  \d t ,\]
and the inequality \eqref{eq:convolution-1} follows. Notice also that as $\sigma \to \delta_0$ we obtain that the costs converge. The fact that $\cc$ is independent of $x$ can be relaxed in a concavity assumption for $x \mapsto \cc(t,x,a)$. A similar argument, in the formulation \eqref{eq:bb-cost-pde}, gives the next result. 

\begin{lemma}[convolution]
Let $\sigma \in C^2(\R^d)$ be a probability density, positive everywhere and  with $|\nabla^i \sigma| \le \lambda \sigma$ for $i \in \cur{1,2}$ (for some $\lambda>0$). Then, if $(\varrho_t)_{t \in [0,1]}$, $a: (0,1) \times \R^d \to \Sim^d_+$ solve \eqref{eq:fpe}, then $\tilde{\varrho}_t := \varrho_t \conv \sigma$ and 
\[ \tilde{a} := \frac{ \d \bra{  \varrho a \conv \sigma} } {\d (\varrho \conv \sigma)} \in C^2(\R^d; \Sim^d_+) \]
solve $\partial_t \tilde{\varrho}  = \Tr\bra{\frac12\nabla^2\tilde{a} \tilde{\varrho}}$ in $(0,1) \times \R^d$. Moreover, if $\cc = \cc(t,a)$ does not depend on $x \in \R^d$ and is such that $a \mapsto \cc(t, a)$ is convex, then
\[ \int_0^1 \int_{\R^d} \cc(t, \tilde{a}(t,x)) \d \tilde{ \varrho}_t (x) \d t  \le \int_0^1 \int_{\R^d} \cc(t, a(t,x)) \d  \varrho_t (x) \d t .\]
\end{lemma}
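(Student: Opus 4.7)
The plan is to split the statement into two independent parts, both following from elementary manipulations of the convolution, once we rewrite the Fokker-Planck equation in the linear form involving the pair $(\varrho, \ca)$ with $\ca := a\varrho$.

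For the Fokker-Planck equation itself, I would use the fact that convolution in the spatial variable commutes with linear operators and intertwines nicely with testing. Namely, given a test function $\varphi\in C^{1,2}_b([0,1]\times\R^d)$ compactly supported in $(0,1)\times\R^d$, I would set $\psi(t,x) := \int \varphi(t, x+y)\,\sigma(y)\d y$. Then $\psi \in C^{1,2}_b$ (again by $|\nabla^i\sigma|\le \lambda\sigma$) and by Fubini
\[
\int \varphi\, \d\tilde\varrho_t = \int \psi\,\d\varrho_t,
\qquad
\int \Tr\bra{\frac12\nabla^2\varphi}\d(\ca\conv\sigma) = \int \Tr\bra{\frac12 a \nabla^2\psi}\d\varrho.
\]
Summing the contributions and integrating in time, the weak formulation of $\partial_t\tilde\varrho = \Tr(\frac12\nabla^2\tilde\ca)$ tested against $\varphi$ is exactly the weak formulation of $\partial_t\varrho = \Tr(\frac12\nabla^2\ca)$ tested against $\psi$. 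To pass back from $\tilde\ca = \ca\conv \sigma$ to the coefficient $\tilde a$, I would observe that both $\tilde\varrho_t$ and $\tilde\ca_t$ are absolutely continuous with respect to Lebesgue measure, with densities $D(t,x) = \int\sigma(x-y)\d\varrho_t(y) > 0$ and $N(t,x) = \int\sigma(x-y) a_t(y)\d\varrho_t(y)$ respectively; the hypothesis $|\nabla^i \sigma|\le \lambda\sigma$ provides integrable majorants so that differentiation under the integral gives $N,D\in C^2$, and hence $\tilde a = N/D \in C^2(\R^d;\Sim^d_+)$.

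For the inequality, the point is that the formula for $\tilde a$ exhibits it as an average: for each $(t,x)$ with $D(t,x)>0$, define the probability measure
\[
\nu_{t,x}(\d y) := \frac{\sigma(x-y)}{D(t,x)}\,\d\varrho_t(y),
\]
so that $\tilde a_t(x) = \int a_t(y)\,\d\nu_{t,x}(y)$. Since $a\mapsto \cc(t,a)$ is convex, Jensen's inequality yields
\[
\cc(t, \tilde a_t(x)) \le \int \cc(t, a_t(y))\,\d\nu_{t,x}(y).
\]
Multiplying by $\tilde\varrho_t(x) = D(t,x)$, integrating in $x$ and invoking Fubini together with $\int \sigma(x-y)\d x = 1$ gives
\[
\int \cc(t,\tilde a_t(x))\,\d\tilde\varrho_t(x) \le \int\int \cc(t, a_t(y))\,\sigma(x-y)\,\d\varrho_t(y)\d x = \int \cc(t, a_t(y))\,\d\varrho_t(y).
\]
Integrating in $t\in[0,1]$ concludes the argument.

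The main technical obstacle is limited to keeping the Radon-Nikodym manipulation consistent: one must verify that $\tilde a$ is well-defined a.e.-$t$ in the relevant sense and that $\tilde\ca \ll \tilde\varrho$, which follows since $D>0$ everywhere and $N$ is controlled by $\int \sigma(x-y)|a_t(y)|\d\varrho_t(y)$, finite for a.e.\ $t$ by the assumption $\int_0^1\int |a_t|\d\varrho_t\d t<\infty$ built into the definition of \eqref{eq:fpe}. Once this is granted, both assertions reduce to Fubini and Jensen.
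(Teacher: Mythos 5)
Your argument is correct, and it fills in a proof that the paper itself only delegates to [trevisan\_well-posedness\_2016, Lemma A.1] and [AmGiSa08, Lemma 8.1.10]: the Fubini/test-function transfer for the FPE part and the Jensen inequality with respect to the kernel $\nu_{t,x}$ for the cost estimate are precisely the standard route those references take. One small remark: to pass from your duality identity against compactly supported $\varphi$ to the equation for $\tilde a = \tilde{\ca}/\tilde\varrho$ in the sense of \eqref{eq:fpe}, you are implicitly using the paper's observation (Section~\ref{sec:notation-basic}) that the ``linear'' formulation in $(\varrho,\ca)$ and the coefficient formulation coincide once $\tilde\ca \ll \tilde\varrho$; it would be worth stating this explicitly. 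Also note that $\psi \in C^{1,2}_b$ follows simply from $\varphi \in C^{1,2}_b$ and dominated convergence — the hypothesis $|\nabla^i\sigma|\le\lambda\sigma$ is not needed there; it is used (as you do later) to justify differentiation under the integral sign in $N$ and $D$, where you should add the short remark that $|\nabla\log\sigma|\le\lambda$ gives the local domination $\sigma(x-y)\le e^{\lambda}\sigma(x_0-y)$ for $|x-x_0|\le 1$, making the dominated-convergence argument for $N,D\in C^2$ rigorous.
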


For a proof, we refer to \cite[Lemma A.1]{trevisan_well-posedness_2016} and \cite[Lemma 8.1.10]{AmGiSa08}. The advantage with respect to \eqref{eq:convolution-1} is that the diffusion coefficient  becomes smooth, if $\sigma$ is chosen appropriately.

\section{Duality}\label{sec:duality}
In this section, we introduce a dual problem to \eqref{eq:bb-cost-pde} which allows us to give optimality conditions for the primal problem. A key role is played by the following (backward) Hamilton-Jacobi-Bellman PDE, 
\begin{equation}
\label{eq:pde 2} \partial_t \varphi(t,x) = -\cc^*\bra{t,x, \frac12 \nabla^2 \varphi(t,x)}. \tag{$\mathsf{HJB}$}
\end{equation}
 We choose to consider only classical solutions and avoid the use of viscosity solutions, although they are a standard tool in such optimal control problems \cite{fleming_controlled_2006}, since their use does not seem to provide further insights to our problem, except for Remark~\ref{rem:viscosity}. We  work on the domains $[0,1]\times \overline{B_r}$ ($r > 0$) and $[0,1]\times \R^d$, specifying boundary conditions in the former situation. 

\begin{definition}[solutions to \eqref{eq:pde 2}]
Let $\Omega = B_r$ or $\Omega = \R^d$. We say that $\varphi \in C^{1,2}_b( [0,1]\times \overline{\Omega})$ is a solution to \eqref{eq:pde 2} if identity holds in \eqref{eq:pde 2} for every $(t,x) \in (0,1) \times \Omega$. We say that $\varphi \in C^{1,2}_b( [0,1]\times \overline{B_r})$ is a super-solution (respectively, sub-solution) to \eqref{eq:pde 2} if inequality $\le$ (respectively, $\ge$) holds, instead of equality, at every $(t,x) \in (0,1) \times \Omega$.
When $\Omega = B_r$, we say that the boundary condition $\nabla^2 \varphi = 0$ holds if (the continuous extension of $\nabla^2 \varphi$ satisfies) $\nabla^2 \varphi(t,x) = 0$, for $(t,x) \in [0,1] \times \partial B_r$.
\end{definition}

\begin{remark}[comparison principle]\label{rem:comparison}
The terms super-solution and sub-solution are justified by the validity of a (standard) comparison result.  For $r >0$, let $\phi, \psi  \in C^{1,2}_b([0,1]\times \overline{B_r})$ be respectively a sub-solution and a super-solution  to \eqref{eq:pde 2}, with $\phi(1, x) \le \psi(1, x)$ for every  $x \in B_r$,  and boundary condition $\nabla^2 \varphi = 0$.  
Then, $\phi \le \psi$ on $[0,1]\times \overline{B_r}$.
\end{remark}

As a first observation, we notice that, if $\varphi \in C^{1,2}_b([0,1]\times \R^d)$ is a super-solution to \eqref{eq:pde 2}, then given any solution $(\varrho_t)_{t \in [0,1]} \subseteq \Prob(\R^d)$ to \eqref{eq:fpe} for some $a:(0,1)\times \R^d \to  \Sim^d_+$, then
\begin{equation}\label{eq:duality-inequality}\begin{split} \int_{\R^d} \varphi(1,x) \d \varrho_1 (x) & - \int_{\R^d} \varphi(0,x) \d \varrho_0 (x)  =\\
& =  \int_0^1 \int_{\R^d} \bra{ \partial_t  \varphi(t,x) + \Tr\bra{ \frac12 a_t(x) \nabla^2 \varphi (t,x)} } \d \varrho_t(x) \d t\\
& \le \int_0^1 \int_{\R^d}\bra{- \cc^*(t,x, \frac12 \nabla^2 \varphi(t,x)) + \Tr\bra{ \frac12 a_t(x) \nabla^2 \varphi }} \d \varrho_t (x) \d t\\
& \le \int_0^1 \int_{\R^d} \cc(t,x, a_t(x)) \d \varrho_t (x) \d t,
\end{split}\end{equation}
and minimizing over the choice of $\varrho$ and $a$ gives
 \[ \int_{\R^d} \varphi(1,x) \d \varrho_1 (x) - \int_{\R^d} \varphi(0,x) \d \varrho_0 (x) \le  \costFP(\varrho_0, \varrho_1).\]

The following result shows that optimizing the left hand side yields equality.

\begin{theorem}[existence and duality]\label{thm:duality}
Let $\Omega = B_r$, for $r>0$ or $\Omega = \R^d$, $p \in (1,+\infty)$ and $\cc:(0,1)\times \R^d \times \Sim_d^+ \to \R \cup \cur{+\infty}$ be $p$-admissible. For every $\mu$, $\nu \in \Prob(\Omega)$, if $\costFP(\mu, \nu) <\infty$, then
\begin{align}\label{eq:duality}
\costFP(\mu, \nu) = \sup \cur{ \int_{\Omega} \varphi(1,x) \d \nu (x) - \int_{\Omega} \varphi(0,x) \d \mu (x)},
\end{align}
the supremum running over all super-solutions $\phi \in C^{1,2}_b([0,1]\times \overline{\Omega})$ to \eqref{eq:pde 2} with boundary condition $\nabla^2 \varphi=0$ if $\Omega = B_r$. Moreover, the  infimum in \eqref{eq:bb-cost-pde} (or in \eqref{eq:bb-cost-process}) is actually a minimum. 
\end{theorem}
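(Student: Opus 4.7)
The plan is to apply the Fenchel--Rockafellar duality theorem (as in the Benamou-Brenier-style arguments of \cite{DoNaSa09}) to obtain simultaneously the equality \eqref{eq:duality} and the attainment of the infimum in \eqref{eq:bb-cost-pde}; existence for \eqref{eq:bb-cost-process} then follows via Theorems~\ref{thm:sp} and \ref{thm:markov-projection}. The inequality $\costFP(\mu,\nu)\ge\sup J$, with $J(\varphi):=\int\varphi(1,\cdot)d\nu - \int\varphi(0,\cdot)d\mu$, is already contained in \eqref{eq:duality-inequality}, so what remains is the matching upper bound and attainment.

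I would first treat the bounded case $\Omega=B_r$, working on test functions $\varphi\in C^{1,2}_b([0,1]\times\overline{B_r})$ with $\nabla^2\varphi=0$ on $[0,1]\times\partial B_r$ (the boundary condition keeps the integration by parts underlying the weak formulation of \eqref{eq:fpe} clean). Introduce the linear map $A\varphi:=(\partial_t\varphi,\tfrac12\nabla^2\varphi)$ into $Y:=C_b([0,1]\times\overline{B_r};\R\times\Sim^d)$ together with the functionals
\[
F(\varphi):=-J(\varphi), \qquad G(h,H):=\begin{cases}0 & \text{if } h+\cc^*(\cdot,\cdot,H)\le 0 \text{ pointwise,}\\ +\infty & \text{otherwise.}\end{cases}
\]
Then $-\sup\{J(\varphi):\varphi \text{ super-sol.}\}=\inf_\varphi\{F(\varphi)+G(A\varphi)\}$. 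A direct computation of Legendre transforms identifies the Fenchel dual as $-\costFP(\mu,\nu)$: finiteness of $F^*(-A^*z)$ forces $z=(\rho,\mathbf{a})$ to solve \eqref{eq:fpe} in the sense of Section~\ref{sec:notation-basic} with marginals $\mu,\nu$ (in particular $\rho\ge 0$), while $G^*(\rho,\mathbf{a})$ is the integrated perspective of $\cc$, coinciding with $\int\cc(t,x,a_t(x))d\rho_t dt$ whenever $\mathbf{a}=a\rho$ with $a\in\Sim^d_+$ and equal to $+\infty$ otherwise (the singular part is penalized by the $p$-growth of $\cc$ because $p>1$).

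For the qualification condition, the choice $\varphi_0(t,x):=K(1-t)$ with any $K>0$ gives $\partial_t\varphi_0+\cc^*(\cdot,\cdot,0)\le -K<0$ uniformly, since $\cc\ge 0$ entails $\cc^*(\cdot,\cdot,0)\le 0$; moreover $(h,H)\mapsto h+\cc^*(\cdot,\cdot,H)$ is continuous in sup-norm because $p$-growth of $\cc$ makes $\cc^*$ real-valued and continuous on $\Sim^d$, so $G$ is continuous at $A\varphi_0$. Combined with $\costFP(\mu,\nu)<\infty$, Fenchel--Rockafellar delivers both the equality \eqref{eq:duality} and an attainer $(\rho,\mathbf{a})$ for $-\costFP(\mu,\nu)$, which unwraps to an optimal pair $(\rho,a)$ for \eqref{eq:bb-cost-pde}.

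The main obstacle is the unbounded case $\Omega=\R^d$, where compactness of the domain is lost and the choice of ambient Banach space becomes subtler. For the duality equation, I would repeat the same Fenchel--Rockafellar scheme directly on $C^{1,2}_b([0,1]\times\R^d)$, with $\varphi_0=K(1-t)$ still providing qualification, but paying attention to the locally convex topology so that the dual variables are finite vector-valued measures. For the existence statement alone, an alternative direct-method argument is available: $p$-coercivity of $\cc$ yields a uniform bound on $\int_0^1\int|a^n|^p d\rho^n_t dt$ along any minimizing sequence for \eqref{eq:bb-cost-pde}, hence weak-$*$ precompactness of $\mathbf{a}^n=a^n\rho^n$; the linearity of \eqref{eq:fpe} in $(\rho,\mathbf{a})$ is preserved in the limit, and lower semicontinuity of the perspective functional (cf.\ \cite[\textsection 5.3]{AmGiSa08}) gives a minimizer. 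The technical care needed in choosing topologies for the unbounded setup is the most delicate part of the argument.
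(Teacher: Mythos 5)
Your overall strategy coincides with the paper's: a Fenchel--Rockafellar argument, with the same operator $A\varphi=(\partial_t\varphi,\tfrac12\nabla^2\varphi)$, the same indicator/perspective pair, and essentially the same qualification point (the paper uses $\phi(t,x)=-t$, you use $K(1-t)$, same idea). For $\Omega=B_r$ the outline is sound. Two issues are worth flagging.

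First, a minor but recurring slip: you attribute both finiteness/continuity of $\cc^*$ and the penalization of the singular part of $\mathbf{a}$ to ``$p$-growth'' of $\cc$. In the paper's terminology $p$-growth is the upper bound $\cc(a)\le\lambda|a|^p$, which yields $q$-coercivity of $\cc^*$ (needed for attainment of the dual), not its finiteness. What you actually need here is $p$-\emph{coercivity} ($\cc(a)\ge\lambda|a|^p$), which gives $\cc^*$ $q$-growth, hence finite and continuous, and is what forbids a singular part of $\mathbf{a}$ relative to $\rho$. Since $p$-admissibility includes both, the conclusion stands, but the justification is misattributed.

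Second, and more substantively, you do not close the case $\Omega=\R^d$. Once $E=C_b(K;\R\times\Sim^d)$, the generic element of $E^*$ produced by Fenchel--Rockafellar is a finitely additive functional, not a Radon measure, and simply ``paying attention to the locally convex topology'' is not an argument. The paper resolves this by showing directly that the optimizing pair $(\varrho,\mathbf{a})$ is nonnegative and tight, using two explicit families of represented test functions: one built from $tg(|x|^2/M^2)$ to control $\varrho$, and one from a primitive $G$ of $g$ to control $\mathbf{a}$, exploiting that $\mu,\nu$ have finite second moments and that $\mathbf{a}$ acts continuously on bounded test matrices. This is the genuinely delicate step and it must be carried out; a reader would not be able to reconstruct it from your sketch. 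Your alternative direct-method fallback for existence has the analogous gap: a uniform $L^p(\d\rho^n)$ bound on $a^n$ gives a total variation bound on $\mathbf{a}^n$, but on $\R^d$ you still need tightness of $\mathbf{a}^n$ (not just of $\rho^n$, which does follow from convex ordering and second moments) before you can pass to a weak-$*$ limit and invoke lower semicontinuity; this needs the same kind of quantitative tail control that the paper's test-function argument supplies.
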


\begin{remark}[uniqueness]
Since the map $a \mapsto \cc(t,x, a)$ is strictly convex (in the usual sense), one has that the minimum in \eqref{eq:bb-cost-pde} is unique (see also the proof below). In particular, by Theorem~\ref{thm:markov-projection}, one has that  \eqref{eq:bb-cost-process} is also a minimum, and a minimizer solves the martingale problem associated to the diffusion coefficient of the minimum in \eqref{eq:bb-cost-pde}. However, the problem of  uniqueness (in law) of minimizers in \eqref{eq:bb-cost-process} remains open, as it seems to rely on regularity of minimizers, which implies uniqueness for the martingale problem.
\end{remark}

The proof is an application of the Fenchel-Rockafellar duality theorem, e.g.\ \cite[Theorem 1.9]{Vill09}, following closely \cite[Section 3.2]{Br99}. 

\begin{proof}  We give the proof in the case of $\Omega =B_r$, the case $\Omega = \R^d$ being along the same lines (the only relevant difference is remarked below). First, we may assume that $\int_{\Omega} \ell \cdot x \d \mu = \int_{\Omega} \ell \cdot x \d \nu$, for every $\ell \in \R^d$,  otherwise both terms are $+\infty$: the left hand side because $\mu$ and $\nu$ would not be in convex order, and the right hand side by letting $\varphi (t,x) := \lambda \ell \cdot x$, which solves \eqref{eq:pde 2} (with appropriate boundary conditions) and letting $\lambda \to \pm \infty$, depending on the sign of the difference.

Write then $K := [0,1]\times \overline{B_r}$ and $E=C(K; \R \times \Sim^d)$, which equipped with the uniform norm is a Banach space, with continuous dual $E^* = \meas (K; \R \times \Sim^d)$, and write the dual pairing as $\varrho(F) + \ca(\Phi)$ for $(F, \Phi) \in E$, $(\varrho, \ca) \in E^*$.  In case $\Omega = \R^d$, we let $E = C_b(K; \R \times \Sim^d)$, and since the dual $E^* \supseteq \meas (K; \R \times \Sim^d)$ one has to additionally argue that the linear functional $(\varrho, \ca)$ that  we obtain below is tight, hence induced by a measure.

 Define $\alpha:E\to (-\infty,\infty]$ by
\begin{align*}
\alpha(F,\Phi)=\begin{cases}
                0 & \text{ if  $F(t,x) +\cc^*(t,x, \Phi(t,x))\leq 0$ for every $(t,x) \in K$} \\
\infty & \text{else.}
               \end{cases}
\end{align*}
Being $a^* \mapsto c^*(t,x,a^*)$ convex, it follows that $\alpha$ is a convex function. Its Legendre-Fenchel transform, defined  by
\[
 \alpha^*(\varrho,\ca)=\sup\cur{\varrho(F)+\ca(\Phi)~:~F+\cc^*(\Phi)\leq 0}
\]
is explicitly given by the (strictly) convex function
\begin{equation}\label{eq:alpha*}  \alpha^*(\varrho,\ca) =   \begin{cases}     \int_{K} \cc\bra{ t,x, a(t,x) } \d \varrho(t,x) & \text{if $\varrho \in\meas^+(K)$ and $\ca  = a \varrho$ with $a \ge 0$},\\
\infty & \text{else.}
               \end{cases}
\end{equation}
Indeed, if $\varrho$ is not a positive measure, then we would like to let $\Phi = 0$ and $F =  -\lambda \chi_A$ for some $A$ such that $\varrho(A) < 0$,  and let $\lambda \to \infty$. However, such a choice of $F$ is not immediately possible, but it is sufficient to approximate $\chi_A$, by density of continuous functions in $L^1(\abs{\varrho})$.  Similarly, if $\ca$ is not absolutely continuous with respect to $\varrho$, we would like to let $F = -\cc^*(\lambda I)\chi_A$  (here we use that $\cc^*< \infty$) and $\Phi = \lambda \chi_A$, where $A$ is such that $\varrho(A) = 0$ and $\ca(A) \neq 0$, so that
\[ \alpha^*(\varrho,\ca) \ge \lambda \ca(A) \to +\infty\]
letting $\lambda \to \pm \infty$, depending on the sign of $\ca(A)$. Again, such a choice of $(F, \Phi)$ is not immediately possible, but it is sufficient to approximate $\chi_A$, by density of continuous functions in $L^1(\varrho +\abs{\ca})$. Hence, we may assume $\ca  = a \varrho$ with $a \in L^1(\varrho)$, so that 
\[ \varrho(F)+\ca(\Phi) = \int_K \bra{ F + a \Phi} \d \varrho \le   \int_{K} \bra{a \Phi - \cc^*(\Phi) } \d \varrho \le \int_K \cc\bra{ a } \d \varrho,\]
and by optimizing among $(F, \Phi)$ one obtains \eqref{eq:alpha*}. In the case  $\Omega = \R^d$, at this stage we argue only that if $\alpha^*(\varrho, \ca) <\infty$, then $\varrho$ and $\ca$ are non-negative functionals.  Indeed, if for some $F \ge 0$ one has $\varrho(F) <0$, choosing the pair $(\lambda F, 0)$ with $\lambda \to -\infty$ would give $\alpha^*(\varrho,\ca) = +\infty$, and similarly if for some $\Phi \ge 0$ one has $\ca(\Phi) < 0$, choosing the pair $(0, \lambda \Phi)$ with $\lambda \to -\infty$.

Next, we say that a pair $(F,\Phi)\in E$ is \emph{represented} by $\varphi\in C^{1,2}(K)$ if $F=-\partial_t \phi$, $\Phi=-\frac12\nabla^2\phi$ and $\nabla^2 \phi = 0$ on $[0,1]\times \partial \Omega$. We define $\beta:E\to (-\infty,\infty]$ by
\begin{align*}
 \beta(F,\Phi)=\begin{cases}
                \int_{\R^d} \varphi(0,\cdot) \d \mu - \int_{\R^d} \varphi(1, \cdot) \d \nu  & \text{if  $(F,\Phi)$ is represented by $\varphi\in C^{1,2}(K)$,}\\
\infty & \text{else.}
               \end{cases}
\end{align*}
 Notice first that $\beta$ is well-defined, i.e.\ it does not depend on the choice of $\phi$. Indeed,  if both $\phi$ and $\psi$ represent $(F,\Phi)$, then $\partial_t( \phi - \psi) = F-F = 0$ and $\frac12\nabla^2( \phi - \psi) = \Phi - \Phi=0$ on $K$. It follows that $\phi - \psi = b+ \ell \cdot x$ for some $b \in \R$, $\ell \in \R^d$, and the argument at the beginning of the proof yields $\int_{\R^d} (\ell \cdot x + b) \d (\nu - \mu) = 0$. 

We notice also that the set of represented functions $(F,\Phi) \in E$ is a linear subspace and $\beta$, in the convex set where it is finite, is linear with respect to  the variable $\Phi$. Hence, $\beta$ is convex with Legendre transform
\[
 \beta^*(\varrho,\ca)=\sup\cur{ \varrho(F)+\ca(\Phi) + \int_{\R^d} \varphi(1,\cdot) \d \nu - \int_{\R^d} \varphi(0,\cdot) \d\mu~:~\text{$(F,\Phi)$ is represented}  },
\]
which in fact takes values in $\cur{0, +\infty}$ and it is zero if and only if, for every $\varphi \in C^{1,2} \bra{K }$ with $\nabla^2 \varphi =0$ on $[0,1]\times \partial \Omega$, one has
\begin{align}\label{eq:beta* constraint}
 \int_{K} \partial_t \varphi \d \varrho + \int_{K} \Tr\bra{ \frac12\nabla^2 \varphi  \d \ca} =  \int_{\R^d} \varphi(1, \cdot ) \d \nu - \int_{\R^d} \varphi(0, \cdot ) \d \mu.
\end{align}
{ When $\Omega = \R^d$, the identity above holds provided that we interpret the integrals in the left hand side as duality pairings. However, under the assumption that $\alpha^*(\varrho, \ca) <\infty$ we can use suitable test functions to prove that the non-negative functionals $\varrho$ and $\ca$ are tight, hence measures. Indeed, let $g: [0, \infty) \to [0,1]$ be any smooth non-decreasing function such that $g(x) = 0$ for $|x| \le 1/2$, $g(x) =1$ for $|x| \ge 1$ and $|g'(x)|$, $|g''(x)|  \le 4$. For $M >0$, consider the function $\phi^M(t,x) :=t g( |x|^2/M^2)$, so that the pair
\[ -(F^M, \Phi^M) := \bra{ g( |x|^2/M^2),  t \bra{ g'\bra{\frac{|x|^2}{M^2}} \frac{1}{M^2} \id+ g''\bra{\frac{|x|^2}{M^2}} \frac{{\color{blue}2 }}{M^4} x \otimes x}}\]
is represented by  $\phi^M$. For any $\varepsilon>0$, let $M>0$ be large enough so that
\[ \int_{\R^d} g \bra{ \frac{|x|^2}{M^2}} \d \nu+ | \ca( \Phi^M ) | < \varepsilon,\]
which is possible because of the tightness of $\nu$ combined with the inequality $ g \bra{ \frac{|x|^2}{M^2}} <  \chi_{\cur{|x| > M/2}}$, and the continuity of the linear functional $\ca$ combined with $|\Phi^M(t,x)| \to 0$ uniformly as $M \to \infty$. For any $F \in C_b( [0,1]\times \R^d)$ such that $|F| \le 1$ and $\operatorname{supp}( F) \subseteq \cur{ |x| > M}$, we have that $|F| \le F^M$, hence, being $\varrho$ a non-negative linear functional,  by \eqref{eq:beta* constraint} 
\[ | \varrho (F) | \le \varrho( |F|) \le \int_{K} F^M \d \varrho = \int_{\R^d} g \bra{ \frac{|x|^2}{M^2}} \d \nu - \ca( \Phi^M )  < \varepsilon.\]
This proves that $\varrho$ is tight, hence induced by a measure. To prove that $\ca$ is tight, we introduce the function $G(s) := \int_0^s g(r) \d r$, extended identically $0$ for $s <0$, so that $G'(s) = g(s)$ and $G''(s) = g'(s) \ge 0$. For $M>0$ consider then the function $\phi^M(t,x) :=G( |x|^2-M^2)$, so that the pair
\[  -\frac12 (F^M, \Phi^M) := \bra{0,  2 g\bra{|x|^2 - M^2} \id+ 4 g'\bra{ |x|^2 - M^2} x \otimes x}\]
is represented by $\phi^M$. We notice that $\Phi^M(t,x) \ge 2 \id \chi_{ \cur{|x| \ge M + 1}}$. For any $\varepsilon>0$, let $M>0$ be large enough so that
\[ \int_{\R^d} G \bra{ |x|^2-M^2} \d \nu < \varepsilon,\]
which is possible because $\int_{\R^d} |x|^2 \nu< \infty$ and $ G \bra{|x|^2-M^2} \le  |x|^2 \chi_{\cur{|x| > M}}$. For any $\Phi \in C_b( [0,1]\times \R^d; \Sim^d)$ such that $|\Phi| \le 1$ and $\operatorname{supp} (\Phi) \subseteq \cur{ |x| > M+1}$, we have that $|\Phi| \le \Phi^M$, hence, being $\ca$ a non-negative linear functional,
\[  | \ca (\Phi ) | \le \ca ( |\Phi| ) \le \ca( \Phi^M) = \int_{\R^d} G \bra{ |x|^2-M^2}  \d (\nu - \mu)  < \varepsilon.\]
This proves that  also $\ca$ is tight, hence induced by a measure, thus completing the argument deductions also in the case $\Omega = \R^d$.}

At the point $(-1,0)\in E$, represented by $\phi(t,x) = -t$, we see that $\alpha$ is continuous, for $\cc^*$ is continuous, and $\beta$ is bounded. Therefore the Fenchel-Rockafellar duality \cite[Theorem 1.9]{Vill09} implies
\begin{align}\label{eq:FR duality}
 \inf\cur{\alpha^*(\varrho,\ca)+\beta^*(\varrho,\ca)~:~(\varrho,\ca)\in E^*}= \sup\cur{-\alpha(-F,-\Phi)-\beta(F,\Phi)~:~(F,\Phi)\in E}
\end{align}
and that the left hand side is actually a minimum. Since the right hand side in \eqref{eq:FR duality} is immediately seen to coincide with the right hand side in \eqref{eq:duality}, to conclude we argue that the left hand side above coincides with \eqref{eq:bb-cost-pde}.

Indeed, if $(\varrho, \ca) \in E^*$ is such that $\alpha^*(\varrho, \ca) + \beta^*(\varrho, \ca) < \infty$, we claim that $\varrho := \varrho_t \d t$ where $(\varrho_t)_{ t \in [0,1]} \subseteq \Prob(B_r)$ solves the Fokker-Planck equation $\partial_t \varrho = \Tr(\frac12\nabla^2a\varrho) $ in $(0,1) \times \R^d$. From \eqref{eq:alpha*} and \eqref{eq:beta* constraint}, letting $\varphi(t,x) = \int_0^t g(s) \d s - \int_0^1 g(s) \d s$, with $g \in C([0,1])$, we deduce the identity
\[ \int_{K} g(t) \d \varrho= \int_0^1 g(t)  \d t.\]
Letting $g(t) = 1$, it follows that $\varrho \in \Prob\bra{K}$. Moreover, a density argument implies that the $t$-marginal of $\varrho$ is Lebesgue measure, and by abstract disintegration of measures we have $\varrho =  \varrho_t \d t$ for some Borel curve $(\varrho_t)_{t \in (0,1)} \subseteq \Prob(B_r )$. Moreover, from \eqref{eq:FR duality} we have that the FPE holds in the extended sense of measure-valued solutions, see Section~\ref{sec:notation-basic}. However, since $\alpha^*(\varrho, \ca)< \infty$ implies that $\ca$ is absolutely continuous with respect to $\varrho$, we conclude that the infimum is actually running on the set of weak solutions to \eqref{eq:fpe} (and in particular, the minimum exists in this set). Moreover, by strict convexity of $\alpha$ we deduce that the minimum is unique. 
\end{proof}

\begin{remark}[viscosity solutions]\label{rem:viscosity}
As a general consequence of the comparison principle, Remark~\ref{rem:comparison},   solutions  to \eqref{eq:pde 2}  always increase the right hand side in \eqref{eq:duality} with respect to super-solutions. Indeed, if $\phi, \psi \in C^{1,2}([0,1]\times \overline{B_r})$ are respectively a solution and a super-solution to \eqref{eq:pde 2} (with appropriate boundary conditions) and $\phi(1, x) = \psi(1,x)$ for every $x \in B_r$, then the comparison principle entails $\phi(0,x) \le \psi(0,x)$ for every $x \in \R^d$, hence
\[\int_{\R^d} \phi(1,x) \d \nu - \int_{\R^d} \phi(0,x) \d \mu \ge  \int_{\R^d} \psi(1,x) \d \nu - \int_{\R^d} \psi(0,x) \d \mu.\]
Then, formally, one could restate the duality \eqref{eq:FR duality} by maximizing among solutions, but this comes with the price of introducing viscosity solutions, in order to obtain solutions for any initial datum. This also gives a precise link with the work \cite{TaTo12}, where all the theory relies from the very beginning on viscosity solutions given by ``explicit'' formulas of Hopf-Lax type.
\end{remark}

From Theorem~\ref{thm:duality}, we also obtain sufficient conditions for optimality.

\begin{corollary}\label{cor:phi optimal}
Let $\cc$ be $p$-admissible and $\phi \in C^{1,2}_b([0,1]\times\R^d)$ solve \eqref{eq:pde 2}, set
\begin{equation}
\label{eq:a-optimal-phi} 
 a_t(x):=\nabla_{u} \cc^*(t,x,\frac12\nabla^2\phi(t,x)) \quad \text{for $(t,x) \in (0,1) \times \R^d$,}\end{equation}
and let $(\varrho_t)_{t\in [0,1]}\subset \mathcal{P}(\R^d)$ be a solution to \eqref{eq:fpe}. Then, $(\varrho_t)_{t\in [0,1]}\subset \mathcal{P}(\R^d)$ is a minimizer in \eqref{eq:bb-cost-pde}, i.e.\ for every $s \le t \in [0,1]$,
\[ \costFP(\varrho_s, \varrho_t) = \int_0^1 \int_{\R^d} \cc(r,x, a_r(x)) \d \varrho_r (x) \d r.\]
\end{corollary}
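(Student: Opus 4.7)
The plan is to show that for the specified $(\varrho,a)$, the chain of inequalities used in \eqref{eq:duality-inequality} collapses to a chain of equalities, and then to invoke \eqref{eq:duality-inequality} once more against an arbitrary competitor to obtain optimality. Fix $0 \le s \le t \le 1$. Since $\phi \in C^{1,2}_b([0,1]\times\R^d)$ solves \eqref{eq:pde 2}, its restriction to $[s,t]\times \R^d$ is, in particular, a super-solution, and the derivation \eqref{eq:duality-inequality} carried out on $[s,t]$ in place of $[0,1]$ yields, for every pair $(\tilde\varrho,\tilde a)$ solving \eqref{eq:fpe} on $[s,t]$ with $\tilde\varrho_s=\varrho_s$, $\tilde\varrho_t=\varrho_t$,
\[ \int_{\R^d}\phi(t,\cdot)\d\varrho_t-\int_{\R^d}\phi(s,\cdot)\d\varrho_s \le \int_s^t\!\int_{\R^d} \cc(r,x,\tilde a_r(x))\d\tilde\varrho_r(x)\d r. \]

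Now I apply the same computation to the distinguished pair $(\varrho,a)$. The first inequality in \eqref{eq:duality-inequality} used the super-solution property $\partial_t\phi\le -\cc^*(t,x,\tfrac12\nabla^2\phi)$; since $\phi$ is a classical solution of \eqref{eq:pde 2}, this is an equality. The second inequality was Young's inequality $\Tr(\tfrac12 a \nabla^2\phi) - \cc^*(t,x,\tfrac12\nabla^2\phi)\le \cc(t,x,a)$; by the choice \eqref{eq:a-optimal-phi} of $a$ and the Fenchel identity \eqref{eq:optimality-c-c-star} applied to $u = \tfrac12\nabla^2\phi(t,x)$, this also becomes an equality, namely
\[ \Tr\!\bra{\tfrac12 a_t(x) \nabla^2\phi(t,x)} - \cc^*\!\bra{t,x,\tfrac12\nabla^2\phi(t,x)} = \cc(t,x,a_t(x)). \]
Combining with the fact that $(\varrho,a)$ solves \eqref{eq:fpe} (so the test-function identity \eqref{eq:fpe-test} applies to $\phi$ on $[s,t]$, the boundedness of $\partial_t\phi$ and $\nabla^2\phi$ justifying the integration against $\varrho$), one obtains the equality
\[ \int_{\R^d}\phi(t,\cdot)\d\varrho_t-\int_{\R^d}\phi(s,\cdot)\d\varrho_s = \int_s^t\!\int_{\R^d}\cc(r,x,a_r(x))\d\varrho_r(x)\d r. \]

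Putting the two displays together yields
\[ \int_s^t\!\int_{\R^d}\cc(r,x,a_r(x))\d\varrho_r(x)\d r \le \int_s^t\!\int_{\R^d}\cc(r,x,\tilde a_r(x))\d\tilde\varrho_r(x)\d r \]
for every admissible competitor $(\tilde\varrho,\tilde a)$ with the same marginals at times $s$ and $t$; after the time rescaling supplied by Lemma~\ref{rem:time-rescaling} (or by trivially reparametrizing $[s,t]$ to $[0,1]$ and noting that $p$-admissibility of $\cc$ is preserved up to the harmless factor $(t-s)^{p-1}$), this is precisely the statement $\costFP(\varrho_s,\varrho_t)=\int_s^t\!\int_{\R^d}\cc(r,x,a_r(x))\d\varrho_r\d r$, so $(\varrho_r)_{r\in[s,t]}$ (with diffusion $a$) is optimal. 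There is no real obstacle: the whole argument is a verification that \eqref{eq:a-optimal-phi} realizes equality in both of the two inequalities exhibited in \eqref{eq:duality-inequality}, together with a restriction to $[s,t]$ to obtain the claimed time-consistency.
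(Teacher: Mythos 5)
Your proof is correct and essentially the same as the paper's: both rest on the chain \eqref{eq:duality-inequality}, where you turn the super-solution inequality into equality (because $\phi$ solves \eqref{eq:pde 2}) and turn Young's inequality into equality (by the Fenchel identity \eqref{eq:optimality-c-c-star} applied with $u=\tfrac12\nabla^2\phi$, giving $a=\nabla_u\cc^*$). The only cosmetic difference is that the paper concludes by invoking Theorem~\ref{thm:duality}, whereas you argue directly by comparing the distinguished pair $(\varrho,a)$ against an arbitrary admissible competitor — a slightly more self-contained route to the same conclusion; your explicit treatment of the restriction to $[s,t]$ and the associated reparametrization is a reasonable elaboration of a point the paper's one-line proof leaves implicit.
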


\begin{proof}
Indeed, for any super-solution $\psi \in C^{1,2}([0,1]\times\R^d)$ to \eqref{eq:pde 2}, arguing as in \eqref{eq:duality-inequality}, one has
\[\int_{\R^d} \psi(1,x) \d \varrho_1 (x) - \int_{\R^d} \psi(0,x) \d \varrho_0 (x) \le \int_0^1 \int_{\R^d} \cc(t,x, a_t(x)) \d \varrho_t (x) \d t. \]
with equality if $\psi = \phi$, hence Theorem~\ref{thm:duality} yields the thesis.
\end{proof}

\begin{example}[transporting Gaussian measures]
 Let $\gamma_{0,Q} \in \Prob(\R^d)$ be a  Gaussian measure with mean $0$ and covariance matrix $Q$. For any $\mu \in\mathcal P(\R^d)$, let $X_0$ be a random variable with law $\mu$, define $\nu := \mu * \gamma_{0,Q}$ and let $(B_t)_{t \in [0,1]}$ be standard a $d$-dimensional Brownian motion (with $B_0 = 0$), independent of $X_0$.  Then, the martingale $(X_0 + \sqrt{Q}B_t)_{t\in [0,1]}$ is an optimizer for any cost function of the form $\cc(t,x, a)=\cc(a)$. Indeed, by \eqref{eq:optimality-c-c-star}, there exists $R \in \Sim^d_+$ such that $Q = \nabla_u \cc^*(\frac{R}{2})$ and letting
\[ \varphi(t,x) := -t \cc^*\bra{\frac{R}{2}} + \frac 1 2 (Rx) \cdot x , \quad \text{for $(t,x) \in [0,1]\times \R^d$,}\]
 one has that $\varphi$ solves \eqref{eq:pde 2}. We are in a position to apply Corollary~\ref{cor:phi optimal}, which implies that the curve $(\mu * \gamma_{0, t Q})_{t \in [0,1]}$, which coincides with that of the $1$-marginals of $(X_0 + \sqrt{Q}B_t)_{t \in [0,1]}$ is a minimizer.
\end{example}

\begin{remark}
One can  formally write an equation for the optimal diffusion coefficient $a$, using \eqref{eq:pde 2}, \eqref{eq:a-optimal-phi} or its dual relation $\frac{1}{2} \nabla^2 \varphi = \nabla_a \cc\bra{t,x, a(t,x)}$ A differentiation of the latter yields that $a:[0,1]\times\R^d\to\Sim_d^+$ satisfies 
\begin{align}\label{eq:a optimal}
 \partial_t a_t= -\bra{\nabla_a^2\cc(t,x,a(t,x))}^{-1}\bra{ \frac{1}{2} \nabla_x^2 (\cc^*(t,x,u(t,x)) +( \partial_t\nabla_a \cc)(t,x,a(t,x))} ,
\end{align}
where $u (t,x) := \nabla_a \cc\bra{t,x, a(t,x)}$. To make this argument rigorous, however, one has to assume that $\cc$, $\cc^*$ are smooth enough, $\nabla_a^2\cc(t,x,\cdot)$ is invertible. Moreover, to argue that if $a$ satisfies \eqref{eq:a optimal} and $\varrho = (\varrho_t)_{t \in [0,1]}$ solves \eqref{eq:fpe}, then $\varrho$ is a minimizer for the problem of transporting $\varrho_0$ to $\varrho_1$, one has to prove that $u(t,x) = \frac{1}{2} \nabla^2 \varphi(t,x)$ for some $\phi \in C^{1,2}_b([0,1]\times\R^d)$. For $d=1$ this becomes easier as we show in the following section.
\end{remark}

\section{One-dimensional case}\label{sec:one-dim}

In this section we specialize to the case $d=1$.  First, we investigate sufficient conditions, besides the necessary convex ordering between $\mu$ and $\nu$, ensuring that $\costM(\mu, \nu)$ is finite, if $\cc$ has $p$-growth for some $p \ge 1$. When $p=1$, It\^o's isometry implies that, for any martingale $X$,
\[ \int_0^1 \esp{ \cc( t, X_t, \dot{\ang{ X}}_t)} \d t \le \lambda \esp{ \int_0^1  \dot{\ang{ X}}_t \d t}  =\lambda \esp{ (X_1 - X_0)^2 } < \infty\]
using the assumptions on the second moments of $\mu$ and $\nu$. For  $p>1$, the Burkholder-Davis-Gundy inequalities in combination with Jensen's inequality provide a lower bound, hence they seem to be of no use. Nevertheless, relying on the solution to the Skorokhod embedding problem, we have the following result.

\begin{proposition}
 Let $\cc$ have $p$-growth, for some $p>1$, let $\mu$ and $\nu$ be in convex order with $\int_{\R} |x|^q\ \d \nu(x)<\infty$ for some $q>2p$. Then, there is a martingale $X\in\ACQV{}$ with $(X_0)_\sharp \prob = \mu$, $(X_1)_\sharp \prob = \nu$ and
$$ \int_0^1\E\left[ \cc\bra{t, X_t, \dot{\ang{X}}_t} \right] dt<\infty\ .$$
In particular, $\costM(\mu, \nu)<\infty$.
\end{proposition}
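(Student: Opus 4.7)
The plan is to build an explicit martingale via the Skorokhod embedding and then speed it up by a deterministic time change so that it lives on $[0,1]$ with absolutely continuous quadratic variation, whose density is controllable thanks to the extra moments of $\nu$.

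First, let $B = (B_s)_{s \ge 0}$ be a standard Brownian motion started from an independent $X_0 \sim \mu$. Since $\mu$ and $\nu$ are in convex order and $\nu$ has finite second moment, one can invoke a minimal Skorokhod embedding (e.g.\ Azéma-Yor, Root, or Chacon-Walsh) to obtain a stopping time $T$ with $B_T \sim \nu$ and such that $(B_{s \wedge T})_{s \ge 0}$ is a uniformly integrable martingale. The Burkholder-Davis-Gundy inequality combined with Doob's maximal inequality then yields
\[
\E[T^{q/2}] \lesssim \E[|B_T - X_0|^q] \lesssim \int_{\R} |x|^q \d \nu + \int_{\R} |x|^q \d \mu < \infty,
\]
where finiteness of the $q$-moment of $\mu$ follows from that of $\nu$ by Jensen's inequality (using the convex order).

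Next, fix a smooth, strictly increasing bijection $\phi\colon [0,1) \to [0,\infty)$ of the form $\phi(t) = t/(1-t)^\alpha$ for some $\alpha > 0$ to be chosen, and define
\[
X_t := B_{\phi(t) \wedge T} \quad \text{for } t \in [0,1), \qquad X_1 := B_T.
\]
With respect to the time-changed filtration $\cF_t := \cF^B_{\phi(t) \wedge T}$, the process $X$ is a continuous martingale (UI allows extending continuously to $t=1$) with $(X_0)_\sharp \prob = \mu$ and $(X_1)_\sharp \prob = \nu$. Its quadratic variation is $\qv{X}_t = \phi(t) \wedge T$, which is absolutely continuous with density $\dot{\qv{X}}_t = \phi'(t) \chi_{\{\phi(t) \le T\}}$, so $X \in \ACQV{}$.

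Finally, using the $p$-growth of $\cc$ (with a constant $\lambda$ uniform in $(t,x)$), a change of variables $s = \phi(t)$ and Markov's inequality applied to $T^{q/2}$ give
\[
\int_0^1 \E\sqa{\cc(t, X_t, \dot{\qv{X}}_t)} \d t
\le \lambda \int_0^1 \phi'(t)^p \prob(T \ge \phi(t)) \d t
= \lambda \int_0^{\infty} \phi'(\phi^{-1}(s))^{p-1} \prob(T \ge s) \d s.
\]
For our choice of $\phi$, one has $\phi'(\phi^{-1}(s)) \sim \alpha\, s^{1+1/\alpha}$ as $s \to \infty$, while $\prob(T \ge s) \lesssim s^{-q/2}$. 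The integrand at infinity is therefore $\lesssim s^{(1+1/\alpha)(p-1) - q/2}$, which is integrable provided $(1+1/\alpha)(p-1) < q/2 - 1$; since $q > 2p$ implies $(p-1) < q/2 - 1$, such an $\alpha$ exists (take it large enough). Behaviour near $s=0$ is harmless as $\phi'$ is bounded there. This proves the cost is finite and, in particular, $\costM(\mu,\nu) < \infty$. The main subtlety lies in matching the rate at which $\phi'$ blows up at $t=1$ with the tail decay of $T$ afforded by the assumption $q > 2p$; no room is left in the borderline case $q = 2p$.
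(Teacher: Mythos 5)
Your proof is correct and follows essentially the same route as the paper's: obtain a uniformly integrable Skorokhod embedding of $\nu$ in Brownian motion started from $\mu$, bound $\E[T^{q/2}]$ via BDG, and then apply a deterministic time change of the form $t\mapsto t/(1-t)^\alpha$ to squeeze the stopped Brownian motion onto $[0,1]$ with absolutely continuous quadratic variation. The only cosmetic difference is how the final integral is estimated — you take the expectation first (using Markov's inequality on the tail $\prob(T\ge\phi(t))$ and then a change of variables), whereas the paper first bounds the pathwise integral $\int_0^1 \dot{\qv{X}}_t^p\,\d t$ in terms of powers of $\tau$ and then takes expectation; the resulting constraint on $\alpha$ (equivalently on $r=1/\alpha$) is the same.
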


\begin{proof}
 It is sufficient to show the thesis with $\cc(t,x,a) = |a|^p$. Let $\tau$ be any solution to the Skorokhod embedding problem (see \cite{Ob04} and \cite{Ho11} for comprehensive surveys) for $\nu$ with start in $\mu$ such that $(B_{t\wedge \tau})_{t\geq 0}$ is uniformly integrable, i.e.\ $B_0\sim \mu, B_\tau\sim \nu$. The assumption on the $q$-th moment of $\nu$, together with the Burkholder-Davis-Gundy inequality and the uniform integrability of $( B_{t\wedge \tau})_{t\geq 0}$, imply that $\E[\tau^{q/2}]\leq \lambda \int \abs{x}^q \d \nu(x)<\infty$, for some $\lambda = \lambda(q)$. Next, we perform a change of time introducing the martingale $(X_t)_{t\in [0,1]}$,
$$X_t := B_{\tau \wedge \beta_t}$$
with $\beta_t := \left(\frac{t}{1-t}\right)^{1/r}$, for some $r \in \left( 0, \frac{q-2p}{2p-2}\right]$. Clearly, $X\in\ACQV{}$ with $\dot{\ang{X}}_t=\chi_{\cur{\beta_t \leq \tau }} \beta'_t=: a_t.$ Moreover, we can calculate
\begin{equation*}\begin{split} 
 \int_0^1 a_t^{p} \d t&  = \int_0^{\beta^{-1}(\tau)}(\beta'_t)^{p-1} \beta'_t \d t = \int_0^\tau \bra{(\beta^{-1})'_s}^{1-p} \d s\\
 &= \int_0^\tau \frac 1{r^{p-1}} s^{(1-r)(p-1)}(1+s^r)^{2(p-1)} \d s \\
 & \leq  \lambda(r,p) \left(\tau^{(1-r)(p-1)+1} + \tau^{p} + \tau^{(1+r)(p-1)+1}\right).
 \end{split}\end{equation*}
The choice of $r$ ensures that $(1+r)(p-1)+1\leq q/2$ so that
\[ \E\sqa{ \int_0^1 a_t^{p}} \d t \leq  \lambda(r,p) \E \sqa{ 1+\tau^{q/2}} \leq \lambda \bra{1+\int_{\R^d} \abs{x}^q d\nu} <\infty,\]
 where we  applied the Burkholder-Davis-Gundy inequality once more.\end{proof}

When  both $\mu$ and $\nu$ have densities with respect to the Lebesgue measure, one can rely on the following variant of the Dacorogna-Moser interpolation technique \cite{DaMo90}. See also \cite[Theorem 1.1]{doring_skorokhod_2017} for recent applications of similar ideas, extending the results in \cite{Du94}, to the Skorokhod embedding problem for Levy processes.

\begin{proposition}[Dacorogna-Moser interpolation]\label{prop:dac-mos}
Let $\mu  = m (x) \d x$, $\nu  = n (x) \d x \in \Prob(\R^d)$ have strictly positive densities, be in convex order and let $f  := k \conv (n-m)$, with $k(x) = x^+$. 
Then, $f \ge 0$ and, if $\varrho_t := (1-t) \mu + t \nu$, $a_t :=  2f/\bra{(1-t)m + t n}$, for $t \in [0,1]$, then \eqref{eq:fpe} holds. Moreover, for any $p \ge 1$,
\begin{equation}\label{eq:dac-mos-upper} \int_0^1 \int_{\R} \abs{a_t}^p \d \varrho_t  \d t \le  \norm{n-m}_{L^1}^{p-1} \int_{\R} \abs{n(x)-m(x)} \int_0^x M_{p}\bra{ m(y), n(y) } |y-x|^p \d y \d x, \end{equation}
where, for $u$, $v >0$,
\[ M_p(u,v) :=  \int_0^1 ((1-t)u +tv)^{1-p} = \begin{cases} \frac{ v^p - u^p}{(p-2)(v-u)} & \quad \text{if $p \neq 2$,}\\
 \frac{\log (v/u)}{v-u} & \quad \text{ if $p=2$.}
 \end{cases}\]
\end{proposition}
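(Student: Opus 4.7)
The first step is to exploit the convex order hypothesis $\mu \leqcx \nu$. Applied to the convex function $y \mapsto (x-y)^+$ for each fixed $x \in \R$, this gives $f(x) = \int_\R (x-y)^+(n(y)-m(y))\d y \ge 0$. Applied to the affine (hence both convex and concave) functions $y \mapsto \pm 1$ and $y \mapsto \pm y$, it yields $\int_\R(n-m)\d y = 0$ and $\int_\R y(n-m)\d y = 0$; combining these with the one-sided formula for $f$ produces the equivalent symmetric representation $2f(x) = \int_\R |x-y|(n(y)-m(y))\d y$.

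Next I would verify the PDE. Since $k(y) = y^+$ has distributional second derivative $\delta_0$, the function $f$ satisfies $\partial_x^2 f = n - m$ as distributions, hence pointwise a.e.\ since $n - m \in L^1(\R)$. The interpolation $\varrho_t = (1-t)m + tn$ has $\partial_t \varrho_t = n - m$, while $a_t \varrho_t = 2f$ is independent of $t$, so $\partial_t \varrho_t = \tfrac12 \partial_x^2(a_t \varrho_t)$ holds distributionally. Testing against $\varphi \in C^{1,2}_b([0,1]\times \R)$ and integrating by parts twice in $x$ produces the weak formulation \eqref{eq:fpe-test}; the decay of $f$ at infinity, obtained by rewriting $2f(x) = \int_\R (|x-y|-|x|)(n-m)(y)\d y$ and invoking the vanishing total mass and first moment of $n - m$, ensures the spatial boundary terms vanish.

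For the $L^p$ bound, since $a_t \varrho_t = 2f$ is independent of $t$, Fubini and the definition of $M_p$ give
\begin{equation*}
\int_0^1 \int_\R a_t(x)^p \d \varrho_t(x) \d t = \int_\R (2f(x))^p \int_0^1 \varrho_t(x)^{1-p}\d t \d x = \int_\R (2f(x))^p M_p(m(x),n(x))\d x.
\end{equation*}
Starting from $2f(x) \le \int_\R |x-y|\,|n-m|(y)\d y$ and applying Jensen's inequality with respect to the probability measure $|n-m|(y)\d y / \norm{n-m}_{L^1}$ yields
\begin{equation*}
(2f(x))^p \le \norm{n-m}_{L^1}^{p-1} \int_\R |x-y|^p |n(y)-m(y)|\d y,
\end{equation*}
and Fubini on the resulting double integral produces the claimed estimate.

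The main obstacle I anticipate is the bookkeeping at the very last step: the natural domain emerging from the symmetric representation is $\int_\R$, whereas the displayed bound writes $\int_0^x$. Reconciling this likely requires using the one-sided form $f(x) = \int_{-\infty}^x (x-y)(n-m)(y)\d y$ before applying Jensen, which restricts the $y$-integration to $(-\infty, x)$, and then invoking the first-moment identity $\int_\R y(n-m)\d y = 0$ to rewrite the contribution from $(-\infty, 0]$ as an integral involving the range $0$ to $x$. Modulo this accounting, the argument reduces to convex order, distributional calculus for $k = (\cdot)^+$, Jensen, and Fubini.
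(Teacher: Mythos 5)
Your first two parts (nonnegativity of $f$ and the verification of \eqref{eq:fpe}) are correct and follow essentially the same route as the paper: convex order applied to $y\mapsto (x-y)^+$, then the distributional identity $\Delta k = \delta_0$ combined with the fact that $a_t\varrho_t = 2f$ is time-independent.

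The gap you yourself flag in the $L^p$ estimate is a real one, and your proposed repair does not close it. Starting from the symmetric representation $2f(x)=\int_\R|x-y|(n-m)(y)\,\d y$ and bounding by $\int_\R|x-y|\,|n-m|(y)\,\d y$ throws away exactly the cancellation that produces the restriction $\int_0^x$, and Jensen then yields a bound with the inner integral over all of $\R$ — a genuinely weaker estimate (e.g.\ it may diverge when $m,n$ have heavy tails, where the stated bound can remain finite). Your suggested fix — using the one-sided form $f(x)=\int_{-\infty}^x(x-y)(n-m)(y)\,\d y$ — does not help, because that integrand is signed, so Jensen is unavailable, and even after taking absolute values the domain is $(-\infty,x)$, not $[0,x]$. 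The missing idea is to subtract the full first-order Taylor polynomial of $k$ at the origin \emph{before} invoking H\"older: since $\int(n-m)=\int y(n-m)=0$, one has
\[ f(x) = \int_\R \bigl(k(x-y)-k(x)+k'(x)\,y\bigr)\,(n-m)(y)\,\d y, \]
and a direct case check shows that $k(x-y)-k(x)+k'(x)y = |y-x|\bigl(\chi_{\{0<x<y\}}+\chi_{\{y<x<0\}}\bigr)$ is a \emph{nonnegative} kernel supported between $0$ and $y$. Applying H\"older with respect to $|n-m|(y)\,\d y/\norm{n-m}_{L^1}$ to this representation, and then Fubini, gives the inner integral $\int_0^x$ automatically. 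So the structure of your argument (Jensen/H\"older plus Fubini) is right, but it must be applied to the Taylor-remainder kernel rather than to the symmetric one, and this is not merely bookkeeping.
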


\begin{proof}
The fact that $f(x) \ge 0$, for $x \in \R$, follows from the assumption of convex ordering, writing
\[ f(x) =  \int_{\R} k(y-x) (n(y)-m(y) ) \d y = \int_{\R} (y-x)^+ \d \nu(y) - \int_{\R} (y-x)^+ \d \mu (y)\]
and using the convexity of $y \mapsto (y-x)^+$. To show that \eqref{eq:fpe} holds, we use instead the fact that $x^+$ is a fundamental solution to the Laplace equation. Hence, we formally write 
\[ \partial_t \varrho= \nu - \mu = \Delta f = \Delta \bra{\frac12 a \varrho}.\]
Rigorously, the identity 
\[ \Delta f = n-m \]
holds in duality with functions in $C^2(\R)$. Let then $\varphi \in C^{1,2}([0,1] \times \R^d)$ and argue by duality,
\[\begin{split} \frac{\d}{\d t} \int_{\R} \varphi (t, x)  \d \varrho_t(x) &  = \int_{\R} \partial_t \varphi (t,x) \d \varrho_t(x)  + \int_{\R} \varphi (t,x) (n(x)-m(x)) \d x  \\
& = \int_{\R} \partial_t \varphi (t,x) \d \varrho_t(x)  + \int_{\R} \varphi (t,x) \Delta f \d x\\
& =  \int_{\R} \partial_t \varphi (t,x) \d \varrho_t(x)  + \int_{\R} \frac12 a_t(x)\bra{  \Delta \varphi} (t,x)  \bra{ (1-t)m(x) + tn(x)} \d x\\
& =  \int_{\R} \partial_t \varphi (t,x) \d \varrho_t(x)  + \int_{\R} \frac12 a_t(x)\bra{  \Delta \varphi} (t,x) \d \varrho_t(x),\\
\end{split}\]
hence integrating with respect to $t \in (0,1)$, we obtain \eqref{eq:fpe-test}.

To show \eqref{eq:dac-mos-upper}, we notice first that, since both $m$ and $n$ are probability densities and, by the convex order assumption on $\mu$ and $\nu$, $\int_\R y m(y) \d y = \int_{\R} y n(y) \d y$, we have the identity, with $k'(x) = \chi_{\cur{x>0}}$,
\[ f(x) = \int_{\R} \bra{ k(x-y) - k(x) - k'(x) y} \bra{n(y)-m(y)} \d y.\]
Hence, by H\"older's inequality,
\[ \begin{split} \abs{f(x)}^p & =  \abs{ \int_{\R}\bra{k(x-y) - k(x) - k'(x) y} \bra{m(y)-n(y)} \d y}^{p}\\
&  \le  \norm{m-n}_{L^1}^{p-1} \int_{\R}\abs{k(x-y) - k(x) - k'(x) y}^p |n(y)-m(y)| \d y. \end{split}\]
We then have 
\[ \begin{split} 
\int_0^1 \int_{\R} \abs{a_t}^p \d \varrho_t  \d t  &= 2^p\int_{\R} \abs{f}^p(x) \int_0^1 \bra{ (1-t)m(x)+tn(x))}^{p-1} \d t \\
& = 2^p \int_{\R} \abs{f}^p(x) M_p\bra{m(x), n(x)} \d x\\
& \le 2^p \norm{n-m}_{L^1}^{p-1} \int_{\R} |n(y)-m(y)| \int_0^y M_p\bra{m(x), n(x)} |y-x| \d y \d x,
\end{split}\]
since $\abs{k(x-y) - k(x) - k'(x) y} = |y-x| \bra{ \chi_{\cur{y<x<0}} + \chi_{\cur{0<x<y}}}$.
\end{proof}

As an application of the technique of Proposition~\ref{prop:dac-mos} and Theorem~\ref{thm:sp}, we may also obtain a PDE proof of the following fundamental result \cite{St65}.

\begin{corollary}[Strassen's theorem]\label{cor:strassen}
If $\mu$, $\nu \in \Prob(\R)$ are in convex order, then there exists a (discrete-time) martingale $(X_0, X_1)$ such that $(X_0)_\sharp \prob =\mu$, $(X_1)_\sharp \prob = \nu$.
\end{corollary}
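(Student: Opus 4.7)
The plan is to reduce the construction of a martingale coupling to the superposition theorem (Theorem~\ref{thm:sp}) applied to the Dacorogna--Moser flow of Proposition~\ref{prop:dac-mos} between smoothed marginals, and then to pass to the limit. Let $\gamma_\eps$ be the centred Gaussian on $\R$ of variance $\eps > 0$ and set $\mu^\eps := \mu \conv \gamma_\eps$, $\nu^\eps := \nu \conv \gamma_\eps$. Convolution with a common probability preserves convex order: for convex $\varphi$, each translate $x \mapsto \varphi(x+z)$ is convex, and integrating the inequality $\int \varphi(\cdot + z)\, d\mu \le \int \varphi(\cdot + z)\, d\nu$ against $d\gamma_\eps(z)$ yields $\mu^\eps \leqcx \nu^\eps$. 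The regularised marginals have smooth, strictly positive densities $m^\eps, n^\eps$ on $\R$ and converge narrowly to $\mu, \nu$ as $\eps \to 0$, with $\int x^2\, d\mu^\eps \to \int x^2\, d\mu$ and similarly for $\nu^\eps$.

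Proposition~\ref{prop:dac-mos} now supplies a continuous curve $(\varrho^\eps_t)_{t \in [0,1]}$ together with a Borel diffusion coefficient $a^\eps : (0,1) \times \R \to [0, +\infty)$ satisfying \eqref{eq:fpe} with $\varrho^\eps_0 = \mu^\eps$ and $\varrho^\eps_1 = \nu^\eps$. The case $p = 1$ of \eqref{eq:dac-mos-upper} bounds $\int_0^1 \int_{\R} a^\eps_t\, d\varrho^\eps_t\, dt$ by a universal multiple of $\int x^2 (d\mu^\eps + d\nu^\eps) < \infty$, so the integrability requirement in the definition of \eqref{eq:fpe} is met. Theorem~\ref{thm:sp} then lifts $(\varrho^\eps, a^\eps)$ to a continuous process $X^\eps = (X^\eps_t)_{t \in [0,1]}$ on some filtered probability space $(\Omega^\eps, \cA^\eps, \prob^\eps, (\cF^\eps_t)_{t \in [0,1]})$, solving the martingale problem associated with $a^\eps$ and having $(X^\eps_t)_\sharp \prob^\eps = \varrho^\eps_t$. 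Testing the martingale problem with $\varphi(x) = x$ (as noted just after \eqref{eq:martingale}) shows that $X^\eps$ is itself a continuous martingale, so $\pi^\eps := (X^\eps_0, X^\eps_1)_\sharp \prob^\eps$ is a discrete-time martingale coupling between $\mu^\eps$ and $\nu^\eps$.

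To conclude, I would pass to the limit $\eps \to 0$. The tightness of $\{\mu^\eps\}$ and $\{\nu^\eps\}$ makes $\{\pi^\eps\}$ tight on $\R^2$, so along a subsequence $\pi^{\eps_n} \to \pi$ narrowly, with $\pi$ a coupling of $\mu$ and $\nu$. For each $f \in C_b(\R)$ the martingale property of $\pi^{\eps_n}$ reads $\int_{\R^2} (y - x) f(x)\, d\pi^{\eps_n}(x,y) = 0$, and the convergence of second moments of the marginals yields $\sup_n \int_{\R^2} (|x|^2 + |y|^2)\, d\pi^{\eps_n} < \infty$. Consequently the family $\{(y-x) f(x)\}$ is uniformly integrable against $\{\pi^{\eps_n}\}$, which combined with narrow convergence allows passage to the limit and gives $\int_{\R^2} (y - x) f(x)\, d\pi(x,y) = 0$; since $f$ was arbitrary, $\pi$ is the required martingale coupling.

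The delicate step I expect is this final passage to the limit, because the integrand $(y-x) f(x)$ witnessing the martingale condition is unbounded, so narrow convergence alone does not suffice. Gaussian mollification is convenient precisely because it preserves second moments up to an additive $\eps$, providing the uniform $L^2$ control that upgrades narrow convergence to convergence of the integrals in question.
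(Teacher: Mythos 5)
Your proposal is correct and follows essentially the same route as the paper: mollify the marginals to get strictly positive densities, apply Proposition~\ref{prop:dac-mos} and Theorem~\ref{thm:sp} to produce an approximating martingale coupling, and pass to the limit via tightness. The one thing you add beyond the paper's terse final sentence is the explicit uniform integrability argument (via the convergence of second moments) showing that the limit of the couplings is still a martingale coupling --- a detail the paper leaves implicit but which, as you note, is genuinely needed since $(y-x)f(x)$ is unbounded.
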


\begin{proof}
For $\eps\in (0,1]$, let $\sigma^\eps(x)$ be a smooth, positive everywhere, mollification kernel on $\R$ converging to $\delta_0$ as $\eps\to 0$ (e.g.\ the heat kernel) and define $\mu^\eps := \mu\conv \sigma^\eps $, $\nu^\eps := \nu \conv \sigma^\eps$, which are in convex order and have strictly positive densities $m^\eps$, $n^\eps$ with respect to the Lebesgue measure, and let $\varrho^\eps$, $a^\eps$ be as in Proposition~\ref{prop:dac-mos}.  By Theorem~\ref{thm:sp}, there exist solutions $(X^\eps_t)_{t \in [0,1]}$ to the martingale problem associated with $a_t^\eps \Delta$,  with $1$-marginals $(\varrho_t^\eps)_{t \in [0,1]}$, which are in particular martingales. Moreover, since the marginals at time $0$ and time $1$ converge as $\eps\to 0$ (respectively to $\mu$ and $\nu$), we have that the family of the joint laws of the martingales $(X^\eps_0, X^\eps_1)$, for $\varepsilon \in (0,1]$ is tight, hence pre-compact. Any limit point provides a martingale as required.
\end{proof}

Notice however that, in general, with this technique one cannot find an interpolating martingale $(X_t)_{t\in [0,1]}$ with continuous paths. For example, when $\mu = \delta_0$, $\nu = \frac 12 \bra{\delta_{-1}+ \delta_1}$, one would obtain a martingale with marginals $\varrho_t = (1-t) \mu + t \nu$, for $t \in [0,1]$, which must have discontinuous paths.

\vspace{1em}

In the remaining part of this section we discuss duality and optimizers. A crucial aspect in $d=1$ is that $\nabla^2 = \partial^2_x =  \Delta$, so that equation \eqref{eq:pde 2} can be written as
\begin{equation}\label{eq:dual-filtration} \partial_t \phi (t,x) = - \cc^*(t,x, \frac12\Delta \phi(t,x) ). \end{equation}
A natural (but formal) idea is then to operate with $\frac12\Delta$ on both sides, so that the variable $u := \frac12\Delta \phi$ solves the backwards generalized porous medium type equation
\begin{equation}\label{eq:pme}
\partial_t u(t,x) =-  \Delta \frac{\cc^*}{2}\bra{t,x, u(t,x)}.\tag{$\mathsf{PME}$}
\end{equation}
The boundary conditions $\Delta \varphi = 0$ on $\partial B_r$ become Dirichlet boundary conditions for $u$. 

Let us point out that, to make a full connection with the variational problems, some difficulties appear because the duality result, Theorem~\ref{thm:duality}, is formulated in terms of classical super-solutions, and (even formally) we must use solutions to perform the change of variable $u = \frac12\Delta \varphi$. To the authors' knowledge, even introducing viscosity solutions (see also Remark~\ref{rem:viscosity}) may not be useful, since they may be not sufficiently regular to provide any weak notion of Laplacian.

Nevertheless, we rely on the theory of porous medium equations \cite{Va07} to obtain solutions $u$ to \eqref{eq:pme} and recover solutions to \eqref{eq:dual-filtration} via integration.  We also notice that in the literature of porous medium equations the variable $\varphi = 2\Delta^{-1} u$ is called a potential and \eqref{eq:dual-filtration} is also called \emph{dual filtration equation}.

\begin{definition}[weak solutions to \eqref{eq:pme}]
Let $r >0 $. We say that $u \in C([0,1]\times [-r,r])$ is a solution to \eqref{eq:pme} if, for every $g \in C^{1,2}_c ((0,1) \times (-r,r))$ it holds
\[  \int_0^1 \int_{-r}^r  u(t,x) \partial_t g (t,x)\d x \d t = \int_0^1 \int_{-r}^r  \frac12\cc^*\bra{t,x, u(t,x)} \Delta g(t,x) \d x \d t .\]
\end{definition}

The one-dimensional theory of such porous medium type equations is well understood, at least in the case of $\cc^*(t,x, u) = 2|u|^q$, see \cite[Chapter 15]{Va07}. We rely on such results to prove the following theorem.

\begin{theorem}[existence of solutions to \eqref{eq:dual-filtration}]\label{thm:existence-pme-hjb}
Let $\cc^*(t,x, u) = 2(u^+)^q$, $q >1$. $r \in [0, \infty)$, $\bar u \in C([-r,r])$, $\bar{u} \ge 0$ and $\bar{u}(-r) = \bar{u}(r) = 0$. Then, there exists a unique $\varphi \in C^{1,2}_b( [0,1]\times [-r, r])$ solving \eqref{eq:pde 2} in $(0,1) \times (-r, r)$ with boundary condition $\Delta \varphi = 0$ on $(0,1) \times \cur{-r, r}$ and $\Delta \varphi_1 = \bar u$. Moreover, one has $\Delta \varphi(t,x) \ge 0$ for $(t,x) \in (0,1) \times (-r, r)$ and for every $t \in [0,1)$, $\Delta \varphi(t,x)$ is Lipschitz continuous.
\end{theorem}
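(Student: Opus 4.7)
The plan is to exploit the substitution $u := \tfrac12 \Delta\varphi$ suggested in the discussion preceding the theorem, which formally converts \eqref{eq:pde 2} with $\cc^*(t,x,\cdot)=2((\cdot)^+)^q$ into the one-dimensional backwards porous medium equation $\partial_t u = -\Delta u^q$. First I would solve a PME for $u$ and then reconstruct $\varphi$ by integrating in time. Via the change of variable $v(s,x):=u(1-s,x)$, the backwards PME turns into the standard forward equation $\partial_s v = \Delta(v^+)^q$ on $(0,1)\times(-r,r)$ with initial datum $v(0,\cdot)=\bar u/2$ and homogeneous Dirichlet boundary conditions. Since $\bar u \geq 0$, classical one-dimensional PME theory \cite[Ch.~5 and Ch.~15]{Va07} produces a unique non-negative weak solution $v \in C([0,1]\times[-r,r])$, and setting $u(t,x):=v(1-t,x)$ yields a non-negative weak solution of $\partial_t u = -\Delta u^q$ with $u(1,\cdot)=\bar u/2$ vanishing on the lateral boundary $\cur{-r,r}$.

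Given such $u$, let $\psi\in C^2([-r,r])$ be the classical antiderivative determined by $\psi''=\bar u$ and $\psi(\pm r)=0$, and define
\[
\varphi(t,x) := \psi(x) + 2\int_t^1 u(s,x)^q\,\d s,\qquad (t,x)\in[0,1]\times[-r,r].
\]
By construction $\partial_t\varphi=-2u^q$, so it remains to identify $\tfrac12 \Delta\varphi(t,\cdot)$ with $u(t,\cdot)$. This is the central technical step: for any test function $g\in C^2_c(-r,r)$, integration by parts in $x$ combined with the weak formulation of the PME,
\[
\frac{\d}{\d s}\int u(s,\cdot)g\,\d x = -\int u(s,\cdot)^q g''\,\d x,
\]
yields
\[
\int \varphi(t,\cdot) g''\,\d x = \int \bar u\, g\,\d x - 2\bra{\int u(1,\cdot)g\,\d x - \int u(t,\cdot)g\,\d x} = 2\int u(t,\cdot)g\,\d x,
\]
using $\psi''=\bar u$ and $u(1,\cdot)=\bar u/2$. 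Since $u$ is continuous, this distributional identity promotes to the pointwise statement $\Delta\varphi=2u$, and combining with the defining relation for $\partial_t\varphi$ gives \eqref{eq:pde 2}; the boundary condition $\Delta\varphi=0$ on $[0,1]\times \cur{-r,r}$ is inherited from the Dirichlet condition on $u$.

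For the regularity, the maximum principle applied to $v$ gives $0\leq u\leq\norm{\bar u}_\infty/2$ on $[0,1]\times[-r,r]$, so both $\partial_t\varphi = -2u^q$ and $\nabla^2\varphi=2u$ are continuous and bounded, while $\nabla\varphi$ is the antiderivative in $x$ of $\nabla^2\varphi$ and hence also continuous; thus $\varphi\in C^{1,2}_b([0,1]\times[-r,r])$. The Lipschitz continuity of $\nabla^2\varphi(t,\cdot)=2u(t,\cdot)$ for $t\in[0,1)$ matches the classical Aronson-type one-dimensional Lipschitz estimate for the PME away from the initial time, also covered in \cite[Ch.~15]{Va07}. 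Uniqueness is obtained by reversing the substitution: any other classical solution $\tilde\varphi$ gives $\tilde u:=\tfrac12\Delta\tilde\varphi$ solving the same Dirichlet problem for the PME with the same terminal datum, forcing $\tilde u=u$, and then $\partial_t(\tilde\varphi-\varphi)=0$ together with $\Delta(\tilde\varphi-\varphi)=0$ leaves only the inherent $x$-affine ambiguity of the problem (which is absent once $\psi$ is fixed). I expect the main obstacle to be the distributional identification $\Delta\varphi=2u$ at general $t$, since it requires carefully interchanging the spatial Laplacian with the time integral relying only on the weak formulation of the PME and the terminal normalization $u(1,\cdot)=\bar u/2$.
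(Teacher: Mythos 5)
Your proof is correct and follows the same overall strategy as the paper's — reduce \eqref{eq:pde 2} to the one-dimensional PME \eqref{eq:pme} via the substitution $u=\tfrac12\Delta\varphi$, solve it using the theory in \cite[Ch.\ 15]{Va07}, and then reconstruct $\varphi$ — but the reconstruction step is genuinely different. The paper defines $\varphi_t$, for each fixed $t$, as the solution of the Poisson problem $\tfrac12\Delta\varphi_t=u_t$ with zero Dirichlet data, and then identifies $\partial_t\varphi=-2u^q$ by computing $\tfrac12\Delta\partial_t\varphi=\partial_t u=-\tfrac12\Delta(2u^q)$ and comparing boundary values. That argument silently interchanges $\partial_t$ with $\Delta$ and uses $\partial_t u$ pointwise, which is only informally justified by the Lipschitz regularity of $u$. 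You instead build $\varphi(t,x)=\psi(x)+2\int_t^1 u(s,x)^q\,\d s$, so $\partial_t\varphi=-2u^q$ holds by construction, and you recover $\Delta\varphi=2u$ from the weak formulation of the PME tested against $g\in C^2_c(-r,r)$, using Fubini and the terminal normalization $u(1,\cdot)=\bar u/2$, then upgrading the distributional identity to a pointwise one by continuity of $u$. This avoids the $\partial_t$/$\Delta$ interchange entirely and is, at that step, cleaner than the paper's argument; your time-reversal to a forward PME is the paper's implicit step made explicit. Your normalizations are also internally consistent with $\Delta\varphi_1=\bar u$ as stated in the theorem (the paper's choice $u_1=\bar u$ together with $\tfrac12\Delta\varphi_t=u_t$ would in fact yield $\Delta\varphi_1=2\bar u$, a harmless constant slip). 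Your remark that uniqueness only holds modulo the $x$-affine ambiguity is accurate and is something the paper's proof does not address either; once you fix the affine part by the choice of $\psi$ (with $\psi(\pm r)=0$), your construction pins it down. Everything else — the maximum principle giving $u\ge 0$, the $C^{1,2}_b$ regularity of $\varphi$ from boundedness and continuity of $u$, and the Lipschitz regularity of $\Delta\varphi(t,\cdot)$ for $t<1$ from the one-dimensional PME estimates — matches the paper.
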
 

\begin{proof}
By the results in \cite[Chapter 15]{Va07},  there exists a weak solution $u$ to \eqref{eq:pme} with $\cc^*(t,x,u) = 2|u|^q$, Dirichlet boundary conditions and $u_1 = \bar{u}$. Moreover, the maximum principle ensures that $u_t \ge 0$ for $t \in [0,1]$ hence $u$ is also a solution with respect to $\cc^*(t,x,u) = 2(u^+)^q$. Moreover, for $t\in [0,1)$, $u_t(x)$ is Lipschitz continuous is space and in time \cite[Theorem 15.6]{Va07}. For every $t \in [0,1]$ let $\varphi_t$ solve $\frac12\Delta \varphi_t = u_t$ in $(-r,r)$  with Dirichlet boundary conditions $\varphi_t (x) = 0$ for $x \in \cur{-r, r}$. Then, $\varphi \in C^{1,2}([0,1)\times[-r,r])$ and one has the identity
\[ \frac12\Delta \partial_t \varphi = \partial_t \frac12\Delta \varphi = \partial_t u = - \frac12\Delta 2|u|^q.\]
Since both  $\partial_t \varphi$ and $-2|u|^q$ agree on the boundary (both are null), we deduce that $\varphi$ solves \eqref{eq:pde 2}. Moreover, since the right hand side in \eqref{eq:pde 2} is continuous up to $t =1$, we deduce that $\varphi \in C^{1,2}_b([0,1)\times[-r,r])$. 
\end{proof}

\begin{remark}[Pressure equation]\label{rem:pressure}
The connection between \eqref{eq:pme} and \eqref{eq:dual-filtration} is even more intriguing if one looks directly at the equation for the optimal diffusion coefficient  \eqref{eq:a optimal}. Indeed, letting $\cc^*(t,x,u) = 2(u^+)^q$, by duality $\cc(t,x,a) = a^p/\bra{p (2q)^{p/q}}$ with $p=q/(q-1)$, and one gets
\begin{align}\label{eq:pressure}
 \partial_t a(t,x) = - \frac{1}{2p } \frac{\Delta a^p(t,x)}{a^{p-2}(t,x)} = - \frac{1}{2} \bra{ a(t,x)\Delta a(t,x) + (p-1)(\partial_x a(t,x))^2 },\tag{$\mathsf{PMPE}$}
\end{align}
which is precisely the \emph{pressure equation} associated to \eqref{eq:pme}. The interplay between equation \eqref{eq:pme} and \eqref{eq:pressure} is very well understood, and may be useful to provide examples, e.g.\ using explicit solutions such as Barenblatt profiles. Let us also notice the regularity theory for \eqref{eq:pressure} yields Lipschitz continuity of $x \mapsto a(t, x)$, hence $1/2$-H\"older continuity of $\sigma(t, x) = \sqrt{a(t,x)}$.  Then, the  Watanabe criterion \cite[Chapter IX, \textsection 3]{ReYo99} provides uniqueness of solutions to the martingale problem. Thus, minimizers of \eqref{eq:bb-cost-process} obtained via Corollary~\ref{cor:phi optimal}    are unique in law.
\end{remark}

One can even allow for  ``explosive'' initial data $\bar{u}$ in Theorem~\ref{thm:existence-pme-hjb}, leading to non-trivial interpolations, as the next example shows.

\begin{example}\label{ex:friendly giant}
Let  $\nu=\frac{1}{2}(\delta_{-1}+\delta_1)$ and $\mu$ in convex order with respect to $\nu$. Let $\cc^*(t,x,u)=2(u^+)^q$  for $q\in (1, \infty)$ and $u=(u_t)_{t\in[0,1]}$ be the solution to the backwards porous medium equation 
$$\partial_t u=-\Delta u^q$$
with terminal condition $u_1=\infty\chi_{(-1,1)}$. More precisely, we let $u$ be the so-called friendly giant (backward), so that \cite[Theorem 5.20]{Va07} gives $u_t(x)=(1-t)^{-\frac{1}{q-1}}g(x)$ where $g$ is the unique (positive everywhere) solution to
$$ \Delta g^q + \frac{1}{q-1}g=0,\quad g^q \in H^1_0(-1,1),$$
 $H^1_0(-1,1)$ being the usual first order Sobolev space of square-integrable functions on $(-1,1)$, with square integrable derivative, and null trace at the boundary.
Defining $a_t=qu_t^{q-1}=q \frac{1}{1-t} g^{q-1}$, which is the corresponding pressure variable, then $a$ solves \eqref{eq:a optimal}. Let $\varrho=(\varrho_t)_{t\in [0,1)}$ be a solution to the corresponding Fokker-Planck equation with initial condition $\varrho_0=\mu$. For any $t<1$, a solution exists since $a$ is bounded and continuous. 

We argue that necessarily $\lim_{t \uparrow1} \rho_t=\nu$. Let $X=(X_t)_{t\in [0,1)}$ be a continuous martingale with marginals $(\varrho_t)_t$ and $\dot{\qv{X}_t}=a_t(X_t)$ on some probability space $(\Omega,\cA,\P,(\F_t)_{t\in [0,1]})$. Observe that $X$ can only stop diffusing  at the boundary $\cur{-1,1}$, for $a_t(x)>0$ on $[0,1)\times(-1,1)$. For $y\in (0,1)$ let 
$$\tau_y=\inf\cur{t\geq 0 : \abs{X_t}\geq y}.$$
We claim that $\P\bra{\tau_y<1}=1$. Indeed, put $g^*(y)=\inf\cur{g(x) : x\in [-y,y]}>0$ by positivity of $g$ inside $(-1,1)$. By the Dambis-Dubins-Schwarz Theorem, possibly enlarging our probability space, $X$ is a time change of Brownian motion $B$, i.e.\ $X_t=B_{\qv{X}_t}.$ Hence, for any $s<1$ we have
\[ \begin{split}
 \P\bra{\tau_y>s} & = \P\bra{\max_{0\leq r\leq s} \abs{X_r} <y} = \P\bra{\max_{0\leq r\leq s} \abs{B_{\qv{X}_r}}<y}\\
 & \leq \P\bra{\max_{0\leq r\leq s} \abs{B_{\int_0^r\frac{1}{1-t}g^*(y)dt}}<y},
\end{split}\]
which goes to zero for $s$ tending to $1$. Hence, for any $y\in (0,1)$ we have $\P\bra{\tau_y<1}=1$ which implies by continuity of $X$ that $\P\bra{\tau_1\leq 1}=1$. This in turn implies our claim. As a consequence of Corollary \ref{cor:phi optimal},  $X$ is a minimizer in \eqref{eq:bb-cost-process} with $\cc(a) = a^p/\bra{p (2q)^{p/q}}$, $p = \frac{q}{q-1}$.
\end{example}

\begin{remark}[localization does not preserve optimality]\label{rem:local}
Differently from the classical optimal transport, ``localization''  does not preserve optimality, in general. Indeed, considering the previous example with $\mu=\delta_0$, by stopping $X$ upon leaving the interval $(-y,y)$ will force the law of $X^\tau_1$ to be $\frac{1}{2}(\delta_{-y}+ \delta_y)$. However, $\dot{\qv{X^\tau_t}}$ is not induced by the corresponding friendly giant on $(-y, y)$, hence it cannot be optimal.
\end{remark}

\section{Conclusion}\label{sec:conclusion}
In this article we introduced a class of Benamou-Brenier type martingale optimal transport problems via a weak length relaxation procedure of discrete martingale transport problems. By linking this class of optimization problems to the classical field of martingale problems and Fokker-Planck equations, we established an equivalent PDE formulation using only marginal probabilities and diffusion coefficients, possibly gaining a complexity reduction.

This approach as well as our results lead to a number of interesting questions that we leave for future work. 

The first one is whether a stronger ``relaxation'' result than Theorem~\ref{thm:length relax} holds, similarly to the construction of a length metric induced by a distance. A statement could be as follows. Given $\cc$ as in Theorem~\ref{thm:length relax}, for any partition $\pi=\{0=t_0<\ldots<t_n=1\} \subseteq [0,1]$, introduce the rescaled cost $\cc^{i}(x,y) = \cc( (y-x)/ \sqrt{t_{i} - t_{i-1}})$ and, for $\mu$, $\nu \in \Prob(\R^d)$, let $\Wd^i(\mu, \nu)$ be  the associated martingale optimal transport cost as in \eqref{eq:discrete mot}. Define 
\[ \cc^{\pi} (\mu, \nu) := \inf\cur{ \sum_{i=1}^n  \Wd^{i}(\varrho_{t_{i-1}},\varrho_{t_i}) (t_i - t_{i-1}) : \text{ $(\varrho_{t_i})_{t_i \in \pi}  \subseteq \Prob(\R^d)$, $\varrho_0 = \mu$, $\varrho_1 = \nu$.}}.\]
Then, $\liminf_{ \norm{\pi} \to 0} \cc^{\pi} (\mu, \nu ) = \bar{\cc}_{BB}(\mu, \nu)$, where $\bar{\cc}(a)$ is given as in \eqref{ca-intro}. It seems possible to obtain the inequality $\le$, up to approximating any $X \in \ACQV{}$ by martingales as in Theorem~\ref{thm:length relax}. The validity of the converse inequality appears to be more difficult. Such a result would provide a closer connection between the discrete- and continuous-time problems.

As a second problem, one could ask whether in the duality \eqref{eq:duality} the supremum is actually a maximum, after a suitable relaxation of the notion of solution to \eqref{eq:pde 2}, e.g.\ arguing with viscosity solutions. Then, natural questions such as regularity of such optimal ``potentials'' should be addressed, possibly leading to a deeper understanding of the structure of optimizers of the primal problem. 

A third problem is the  numerical study of the variational problems,  exploiting the PDE formulation to reduce complexity (and make a comparison with the examples in \cite{TaTo12}). In this direction, we quote the recent preprint \cite{Guo2017} for a numerical study of the one-dimensional problem via duality.

Finally, we mention that in the one-dimensional case, after the change of variable $u =\frac12\Delta \varphi$, the dual problem \eqref{eq:duality} in Theorem \ref{thm:duality} seems to be equivalent to 
\[\sup \cur{ \int_{\R} u(1,x) \pi_\nu(x) \d x - \int_\R u(0,x) \pi_{\mu}(x) \d x }, 
\]
the supremum running over all solutions to \eqref{eq:pme}, and $\pi_\varrho(x) := \int_{\R} |x-y| \d \varrho (x)$ being the one-dimensional Newtonian potential of a measure $\varrho$. Rigorously, this alternative duality is equivalent to Theorem \ref{thm:duality} if we ask $u$ to be a weak super-solution to \eqref{eq:pme} in duality against convex functions only; i.e.\ if we restrict the class of test functions for \eqref{eq:pme} to  convex functions. Is such an extremely weak notion to \eqref{eq:pme} sufficient to obtain a reasonable theory of well-posedness? 
This alternative formulation of the dual problem is also very natural, as it takes the irreducible components into account (see \cite{BeJu16} for the definition and use of irreducible components for martingale transport, and \cite{BeNuTo16, BeLiOb17} for an application to duality). This can be seen by writing
\begin{align*}
&\int_{\R} u(1,x) \pi_\nu(x) \d x - \int_\R u(0,x) \pi_{\mu}(x) \d x\\
 =& \int_\R u(1,x)(\pi_\nu(x)-\pi_\mu(x))dx + \int_\R (u(1,x)-u(0,x)) \pi_\mu(x) dx.
\end{align*}
The difference of the potentials in the first integral suggests --- what is known from a nice probabilistic argument (see \cite[Appendix]{BeJu16} and \cite{BeNuTo16}) --- that the initial condition $u(1,\cdot)$ should be independently defined on each open component of $\{\pi_\nu-\pi_\mu\}$. Developing a good understanding of this phenomenon from the PDE point of view could be particularly interesting for the higher dimensional case, which is far more complicated since it is unclear which functional should replace the Newtonian potential, see \cite{DeTo17, ObSi17}.

\appendix
\section{Proof of Theorem~\ref{thm:length relax}}\label{app:proof}

We consider only \eqref{eq:length relax}, the proof of \eqref{eq:length relax two} being similar. By continuity of $\dot{\ang{ X}}$, the Riemann sums computed on $\pi = \cur{t_0 =0 < \ldots < t_n=1} \subseteq [0,1]$,
\[ \sum_{i=1}^n \E\left[ \cc\bra{\textstyle{\sqrt{\dot{\qv{X}}}_{t_{i-1}} }\, Z} \right] (t_i- t_{i-1}), \]
converge to the right hand side  of \eqref{eq:length relax} as $\norm{\pi} \to 0$. Therefore, it suffices to prove that 
\begin{equation}\label{eq:proof-relaxation-error} \lim_{\nor{\pi}\to 0} \sum_{i=1}^n \bra{ \E\sqa{ \cc\bra{\frac{X_{t_i}-X_{t_{i-1}}}{ \sqrt{t_i- t_{i-1}} }}} -  \E\left[ \cc\bra{\textstyle{\sqrt{\dot{\qv{X}} }_{t_{i-1}}}\, Z} \right] }(t_i- t_{i-1})  = 0. \end{equation}

To this aim, we use  \cite[Theorem~3.9]{ReYo99}, so that there exists a predictable process $(\sigma_t)_{t\in [0,1]}$ with values in $\Sim^d_+$ such that
\[ X_t = X_0+\int_0^t \sigma_s \d W_s, \quad \text{for $t \in [0,1]$, $\prob$-a.s.,}\]
where $(W_t)_{t \in [0,1]}$ is a $d$-dimensional Wiener process, possibly on an enlarged probability space. In fact, the proof of \cite[Theorem~3.9]{ReYo99} gives the identity  $\sigma_t = \textstyle{\sqrt{\dot{\qv{X}}}}_t U_t$ for some  predictable process $(U_t)_{t \in [0,1]}$ with values in  $d \times d$ orthogonal matrices. Up to replacing  $(W_t)_{t \in [0,1]}$ with the Wiener process   $\int_0^t U_s \d W_s$, we can assume that $U_t$ is the identity matrix for $t \in [0,1]$, and $\sigma = \textstyle{\sqrt{\dot{\qv{X}}}}$.

In this situation, for $t >0$, one has, by the  Burkholder-Davis-Gundy inequality 
\begin{equation}\label{eq:bdg-bound-1} \E\sqa{ \abs{ \frac{X_t -X_0}{\sqrt{t}} }^{2p} }  \le \lambda_p  \E  \sqa{ \abs{ \frac 1t \int_0^t  |\sigma_s|^2 ds}^{p}} \le 	\lambda \E  \sqa{ \frac{1}{t} \int_0^t  \abs{\sigma_s}^{2p} \d s },\end{equation}
(here and below, $\lambda$ represent different constants, possibly changing line to line). Similarly, starting from the identity
\[ X_t -X_0 - \sigma_0 W_t    =  \int_0^t \bra{ \sigma_s -\sigma_0 } \d W_s\]
we obtain
\begin{align}\label{eq:bdg-bound-2}
\E \sqa{ \abs{ \frac{X_t-X_0-\sigma_0 W_t}{\sqrt{t}}}^{2p}  }
& \leq  \lambda \E  \sqa{ \abs{ \frac 1t \int_0^t \abs{ \sigma_s - \sigma_0}^2 ds}^{p}}	\\ \nonumber
\end{align}
where the second inequality follows by Jensen's inequality and  the inequality $\abs{ a^{1/2} - b^{1/2} } \le \lambda \abs{ a-b }^{1/2}$, for  $a$, $b \in \Sim^d_+$.

 Using the assumption $|\cc(y) - \cc(x)| \le \lambda(1+ |x|^{2p-1} + |y|^{2p-1}) |y-x|$, with  $x = (X_t-X_0)t^{-1/2}$ and $y = \sigma_0 W_t t^{-1/2}$,  we have that
\[ \begin{split}
 \bigg| &  \E \sqa{  \cc \bra{ \frac{X_t -X_0}{\sqrt{t}} } }     - \E \sqa{   \cc \bra{ \frac{ \sigma_0 W_t}{\sqrt{t} } }} \bigg|  \le  \\  
& \le  \E \sqa{ A  \abs{ \frac{X_t -X_0 -  \sigma_0 W_t}{\sqrt{t}} }} \\
& \quad\quad\quad \text{with $A := \lambda(1+ |(X_t-X_0)t^{-1/2}|^{2p-1} +  |\sigma_0 W_t t^{-1/2}|^{2p-1})$,} \\
& \le \eps \E  \sqa{A^{2p/(2p-1)}} +  \lambda(\eps) \E  \sqa{ \abs{ \frac{X_t-X_0- \sigma_0 W_t}{\sqrt{t}}} ^{2p} }  \\
&  \quad \quad\quad \text{by Young's inequality (with $\eps>0$),}\\  
& \le \eps \E  \sqa{ 1+\frac{1}{t} \int_0^t  \abs{\sigma_s}^{2p} \d s + \abs{\sigma_0 }^{2p}|W_tt^{-1/2}|^{2p} }  +  
\lambda(\eps)   \E  \sqa{ \frac{1}{t} \int_0^t  \abs{\sigma^2_s -\sigma^2_0}^{p} \d s } \\
&  \quad\quad\quad\text{by \eqref{eq:bdg-bound-1} and \eqref{eq:bdg-bound-2}.}
\end{split}\]
Moreover, since $W_t t^{-1/2}$ is a  $d$-dimensional standard Gaussian, independent of $\cF_0$, we have
\[ \E\sqa{ \cc \bra{ \sigma_0 W_t t^{-1/2} }}  =  \E\sqa{ \cc \bra{ \sigma_0 
Z}}.\] 

Then, for a given partition $\pi= \cur{0=t_0 < \ldots < t_n=1}$, for $i \in \cur{1, \ldots, n}$, we apply \eqref{eq:bdg-bound-1} and \eqref{eq:bdg-bound-2} to each martingale $X^i_s := X_{(1-s)t_{i-1} + s t_i}$, $s \in [0,1]$ (with respect to the naturally reparametrized filtration), obtaining the inequality
\begin{align*}
 & \sum_{t_i \in \pi}  \abs{ \bra{   \E\left[ \cc\bra{ \frac{X_{t_i}-X_{t_{i-1}}}{\sqrt{t_i-t_{i-1}}}} \right]- \E\left[  \cc\bra{\textstyle{\sqrt{\dot{\qv{X}} }_{t_{i-1}}}\, Z}  \right]} } (t_i- t_{i-1})  \\
\leq  &  \sum_{t_i \in \pi}  \eps \E  \sqa{ (t_i-t_{i-1})(1+ |Z|^{2p}) + \int_{t_{i-1}}^{t_i} |\sigma_s|^{2p} \d s  } +\lambda(\eps) \E  \sqa{  \int_{t_{i-1}}^{t_i} |\sigma_s - \sigma_{t_{i-1}} |^{2p} \d s  }
\end{align*}
Letting $\norm{\pi} \to 0$, we obtain by continuity of $\sigma$ that
\[\begin{split}  \limsup_{ \norm{\pi} \to 0} \sum_{t_i \in \pi}  & \abs{ \bra{   \E\left[  \cc\bra{ \frac{X_{t_i}-X_{t_{i-1}}}{\sqrt{t_i-t_{i-1}}}}\right]- \E\left[  \cc\bra{\textstyle{\sqrt{\dot{\qv{X}} }_{t_{i-1}}}\, Z}  \right]} } (t_i- t_{i-1}) \\
&\le    \eps  \E\sqa{ 1+ |Z|^{2p} + \int_0^1 |\sigma_s|^{2p} \d s},\end{split}\]
and as $\eps \to 0$ we conclude that \eqref{eq:proof-relaxation-error} holds.

\bibliographystyle{alpha}

\bibliography{joint_biblio}

\end{document}